
\documentclass[11pt]{amsart}
\usepackage{amsopn}
\usepackage{amssymb, amscd}
\usepackage{paracol}
\usepackage{graphicx, graphics, epsfig}
\usepackage{faktor}
\usepackage{enumerate}
\usepackage{hyperref}
\usepackage{tikz-cd}
\globalcounter{figure}

\topmargin 0cm
\evensidemargin 0.5cm
\oddsidemargin 0.5cm
\textwidth 15cm \textheight 23cm

\newcommand{\nc}{\newcommand}

\nc{\fg}{\mathfrak{f} } \nc{\vg}{\mathfrak{v} } \nc{\wg}{\mathfrak{w} }
\nc{\zg}{\mathfrak{z} } \nc{\ngo}{\mathfrak{n} } \nc{\kg}{\mathfrak{k} }
\nc{\mg}{\mathfrak{m} } \nc{\bg}{\mathfrak{b} } \nc{\ggo}{\mathfrak{g} }
\nc{\ggob}{\overline{\mathfrak{g}} } \nc{\sog}{\mathfrak{so} }
\nc{\sug}{\mathfrak{su} } \nc{\spg}{\mathfrak{sp} } \nc{\slg}{\mathfrak{sl} }
\nc{\glg}{\mathfrak{gl} } \nc{\cg}{\mathfrak{c} } \nc{\rg}{\mathfrak{r} }
\nc{\hg}{\mathfrak{h} } \nc{\tgo}{\mathfrak{t} } \nc{\ug}{\mathfrak{u} }
\nc{\dg}{\mathfrak{d} } \nc{\ag}{\mathfrak{a} } \nc{\pg}{\mathfrak{p} }
\nc{\sg}{\mathfrak{s} } \nc{\affg}{\mathfrak{aff} } \nc{\qg}{\mathfrak{q} }
\nc{\lgo}{\mathfrak{l} }

\nc{\pca}{\mathcal{P}} \nc{\nca}{\mathcal{N}} \nc{\lca}{\mathcal{L}}
\nc{\oca}{\mathcal{O}} \nc{\mca}{\mathcal{M}} \nc{\tca}{\mathcal{T}}
\nc{\aca}{\mathcal{A}} \nc{\cca}{\mathcal{C}} \nc{\gca}{\mathcal{G}}
\nc{\sca}{\mathcal{S}} \nc{\hca}{\mathcal{H}} \nc{\bca}{\mathcal{B}}
\nc{\dca}{\mathcal{D}} \nc{\val}{\operatorname{val}}

\nc{\vp}{\varphi} \nc{\ddt}{\frac{d}{dt}} \nc{\dds}{\frac{d}{ds}}
\nc{\dpar}{\frac{\partial}{\partial t}} \nc{\im}{\mathrm{i}}

\nc{\SO}{\mathrm{SO}} \nc{\Spe}{\mathrm{Sp}} \nc{\Sl}{\mathrm{SL}}
\nc{\SU}{\mathrm{SU}} \nc{\Or}{\mathrm{O}} \nc{\U}{\mathrm{U}} \nc{\Gl}{\mathrm{GL}}
\nc{\Se}{\mathrm{S}} \nc{\Cl}{\mathrm{Cl}} \nc{\Spein}{\mathrm{Spin}}
\nc{\Pin}{\mathrm{Pin}} \nc{\G}{\mathrm{GL}_n(\RR)} \nc{\g}{\mathfrak{gl}_n(\RR)}

\nc{\RR}{{\Bbb R}} \nc{\HH}{{\Bbb H}} \nc{\CC}{{\Bbb C}} \nc{\ZZ}{{\Bbb Z}}
\nc{\FF}{{\Bbb F}} \nc{\NN}{{\Bbb N}} \nc{\QQ}{{\Bbb Q}} \nc{\PP}{{\Bbb P}} \nc{\OO}{{\Bbb O}}

\nc{\vs}{\vspace{.2cm}} \nc{\vsp}{\vspace{1cm}} \nc{\ip}{\langle\cdot,\cdot\rangle}
\nc{\ipp}{(\cdot,\cdot)} \nc{\la}{\langle} \nc{\ra}{\rangle} \nc{\unm}{\tfrac{1}{2}}
\nc{\unc}{\tfrac{1}{4}} \nc{\und}{\tfrac{1}{16}} \nc{\no}{\vs\noindent}
\nc{\lam}{\Lambda^2(\RR^n)^*\otimes\RR^n} \nc{\tangz}{{\rm T}^{\rm Zar}}
\nc{\nor}{{\sf n}}  \nc{\mum}{/\!\!/} \nc{\kir}{/\!\!/\!\!/}
\nc{\Ri}{\tfrac{4\Ric_{\mu}}{||\mu||^2}} \nc{\ds}{\displaystyle}
\nc{\ben}{\begin{enumerate}} \nc{\een}{\end{enumerate}} \nc{\f}{\frac}
\nc{\lb}{[\cdot,\cdot]} \nc{\isn}{\tfrac{1}{||v||^2}}
\nc{\gkp}{(\ggo=\kg\oplus\pg,\ip)} \nc{\ukh}{(\ug=\kg\oplus\hg,\ip)}
\nc{\tgkp}{(\tilde{\ggo}=\kg\oplus\pg,\ip)}
\nc{\wt}{\widetilde}
\nc{\iop}{\mathtt{i}} \nc{\jop}{\mathtt{j}} \nc{\gk}{g_{\kil}}

\nc{\alp}{\alpha^2}  \nc{\bet}{\beta^2}  \nc{\gam}{\gamma^2} \nc{\et}{\eta^2}

\nc{\Hess}{\operatorname{Hess}} \nc{\ad}{\operatorname{ad}}
\nc{\Ad}{\operatorname{Ad}} \nc{\rank}{\operatorname{rank}}
\nc{\Irr}{\operatorname{Irr}} \nc{\End}{\operatorname{End}}
\nc{\Aut}{\operatorname{Aut}} \nc{\Inn}{\operatorname{Inn}}
\nc{\Der}{\operatorname{Der}} \nc{\Ker}{\operatorname{Ker}}
\nc{\Iso}{\operatorname{Iso}} \nc{\Diff}{\operatorname{Diff}}
\nc{\Lie}{\operatorname{L}} \nc{\tr}{\operatorname{tr}} \nc{\dif}{\operatorname{d}}
\nc{\sen}{\operatorname{sen}} \nc{\modu}{\operatorname{mod}}
\nc{\CRic}{\operatorname{PP}} \nc{\Cric}{\operatorname{P}} \nc{\Ricci}{\operatorname{Ric}}
\nc{\sym}{\operatorname{sym}} \nc{\herm}{\operatorname{herm}} \nc{\symac}{\operatorname{sym^{ac}}}
\nc{\symc}{\operatorname{sym^{c}}} \nc{\scalar}{\operatorname{Sc}}
\nc{\grad}{\operatorname{grad}} \nc{\ricci}{\operatorname{Rc}}
\nc{\Nor}{\operatorname{Norm}}  \nc{\ricc}{\operatorname{Rc^{c}}}
\nc{\Ricc}{\operatorname{Ric^{c}}} \nc{\ricac}{\operatorname{Rc^{ac}}}
\nc{\Ricac}{\operatorname{Ric^{ac}}} \nc{\Riem}{\operatorname{Rm}} \nc{\Sec}{\operatorname{Sec}}
\nc{\riccig}{\operatorname{ric^{\gamma}}} \nc{\Rin}{\operatorname{M}}
\nc{\kil}{\operatorname{B}} \nc{\cas}{\operatorname{C}} \nc{\Le}{\operatorname{L}}
\nc{\tang}{\operatorname{T}}
\nc{\level}{\operatorname{level}} \nc{\rad}{\operatorname{r}}
\nc{\abel}{\operatorname{ab}} \nc{\CH}{\operatorname{CH}} \nc{\Cone}{{\mathcal C}} \nc{\CCone}{\operatorname{CC}} \nc{\CP}{{\mathcal P}}
\nc{\mcc}{\operatorname{mcc}} \nc{\Adj}{\operatorname{Adj}}
\nc{\Order}{\operatorname{O}}  \nc{\inj}{\operatorname{inj}} \nc{\proy}{\operatorname{pr}}
\nc{\vol}{\operatorname{vol}} \nc{\Diag}{\operatorname{Dg}} \nc{\Diagg}{\operatorname{Diag}}
\nc{\Spec}{\operatorname{Spec}} \nc{\Ima}{\operatorname{Im}} \nc{\Rea}{\operatorname{Re}}
\nc{\spann}{\operatorname{span}} \nc{\Aff}{\operatorname{Aff}}
\nc{\mm}{\operatorname{m}} \nc{\id}{\operatorname{Id}} \nc{\Bb}{\operatorname{B}}
\nc{\lic}{\operatorname{L}}

\newtheorem{theorem}{Theorem}[section]
\newtheorem{proposition}[theorem]{Proposition}

\newtheorem{lemma}[theorem]{Lemma}

\theoremstyle{definition}

\theoremstyle{remark}
\newtheorem{remark}[theorem]{Remark}

\title[Stability of standard Einstein metrics]{Stability of standard Einstein metrics on homogeneous spaces of non-simple Lie groups}

\author{Valeria Guti\'errez and Jorge Lauret}

\address{FaMAF, Universidad Nacional de C\'ordoba and CIEM, CONICET (Argentina)}
\email{valeria.gutierrez@unc.edu.ar, jorgelauret@unc.edu.ar}

\thanks{This research was partially supported by grants from FONCyT and Univ.\ Nac.\ de C\'ordoba.}

\date{\today}

\begin{document}

\begin{abstract}
The classification of compact homogeneous spaces of the form $M=G/K$, where $G$ is a non-simple Lie group, such that the standard metric is Einstein is still open.  The only known examples are
$4$ infinite families and $3$ isolated spaces found by Nikonorov and Rodionov in the 90s.  In this paper, we prove that most of these standard Einstein metrics are unstable as critical points of the scalar curvature functional on the manifold of all unit volume $G$-invariant metrics on $M$, providing a lower bound for the coindex in the case of Ledger-Obata spaces.  On the other hand, examples of stable (in particular, local maxima) invariant Einstein metrics on certain homogeneous spaces of non-simple Lie groups are also given.  
\end{abstract}

 \maketitle

 \tableofcontents

\section{Introduction}\label{intro}

Any manifold $M$ on which a given compact semisimple Lie group $G$ is acting transitively can be endowed with a canonical $G$-invariant Riemannian metric $g_{\kil}$ provided by the Killing form of the Lie algebra $\ggo$ of $G$, so called the {\it standard} metric.  In the case when $G$ is simple, the Einstein condition for $g_{\kil}$ was studied by Wang and Ziller in \cite{WngZll2}, where it is obtained a complete classification.  The list of homogeneous spaces $M=G/K$ with $G$ simple, beyond isotropy irreducible spaces, on which $\gk$ is Einstein consists of $10$ infinite families and $2$ isolated examples for $G$ classical and $20$ isolated examples with $G$ exceptional.  The stability types of these metrics as critical points of the scalar curvature functional 
$$
\scalar:\mca_1^G\longrightarrow \RR,
$$ 
where $\mca^G_1$ is the manifold of all $G$-invariant metrics of some fixed volume on $M$, have recently been obtained in \cite{stab}.  The standard metric is $G$-unstable on most of the spaces with $G$ classical, being even a local minimum on many of them (see \cite[Table 1]{stab}), while $g_{\kil}$ is $G$-stable and therefore a local maximum on $14$ of the $20$ spaces with exceptional $G$ (see \cite[Tables 2,3,4]{stab}).

On the other hand, in the case when $G$ is not simple, the classification of standard Einstein metrics is still an open question (see \cite[Section 4.14]{NknRdnSlv} and references therein).  Under certain conditions, Nikonorov obtained in \cite{Nkn} the following algebraic constraints on the structure of homogeneous spaces on which the standard metric $\gk$ is Einstein.  We fix a decomposition 
\begin{equation}\label{dec-intro}
\ggo=\ggo_1 \oplus \ggo_2 \oplus \cdots \oplus \ggo_m,
\end{equation}
of the semisimple Lie algebra $\ggo$ in simple ideals and consider $\pi_j:\kg \rightarrow \ggo_j$, the corresponding projections.

\begin{theorem}\label{Nthm}\cite[Theorem 6]{Nkn}.
Let $(G/K,\gk)$ be a connected irreducible standard homogeneous Einstein space with semisimple isotropy group $K$ such that either $\pi_j(\kg)=\ggo_j$ or $\pi_j(\kg)$ consists of two simple summands. Then it is defined by the following scheme of inclusion:
\begin{equation*}
  \begin{aligned}
  K:=H \times L_1 \times \cdots \times L_n &\subset\underbrace{ H \times \cdots \times H}_m \times L_1 \times \cdots \times L_n \\
  &\subset \underbrace{H\times \cdots \times H}_{m-n} \times G_1 \times \cdots \times G_n =G,
  \end{aligned}
\end{equation*}
where $H, \ L_i$ and $G_i$ are simple Lie groups, the first inclusion has the form $\operatorname{diag}(H)\times \id \times \cdots \times \id$, the second has the form $\id \times \cdots \times \id \times \iota_1 \times \cdots \times\iota_n$, where $\iota_i: H\times L_i \rightarrow G_i$ are some inclusions.
\end{theorem}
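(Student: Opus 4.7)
My plan is to combine the Casimir criterion for Einstein standard metrics with a combinatorial analysis of how the simple ideals of $\kg$ sit inside $\ggo$. Decompose $\kg=\kg_1\oplus\cdots\oplus\kg_s$ into simple ideals. Since distinct $\kg_i$ commute, each restriction $\pi_j|_{\kg_i}$ is either zero or an isomorphism of $\kg_i$ onto a simple ideal of $\pi_j(\kg)\subseteq\ggo_j$. Under the hypothesis, for each $j$ either exactly one $\kg_i$ maps isomorphically onto $\ggo_j$ (so $\pi_j(\kg)=\ggo_j$), or exactly two distinct $\kg_i$'s map onto the two simple summands of $\pi_j(\kg)\subsetneq\ggo_j$. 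Encode this in a bipartite incidence graph $\Gamma$ on $\{\kg_i\}\cup\{\ggo_j\}$, with edges recording nonzero projections; every $\ggo_j$-vertex then has degree $1$ or $2$. Call $\kg_i$ \emph{diagonal} if it meets at least two $\ggo_j$'s and \emph{local} otherwise, and note that irreducibility of $G/K$ is equivalent to connectedness of $\Gamma$.

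Next, I invoke the Einstein criterion: the standard metric $\gk$ is Einstein iff the Casimir operator $C_\kg$ of the isotropy representation on $\pg$, computed with respect to $-B|_\kg$ (where $B$ is the Killing form of $\ggo$), is a scalar multiple of the identity. Writing $B_{\ggo_j}|_{\pi_j(\kg_i)}=c_{i,j}B_{\kg_i}$ with $c_{i,j}>0$, the scalar by which $C_\kg$ acts on any $K$-irreducible summand of $\pg$ becomes an explicit linear combination of the individual $\kg_i$-Casimir eigenvalues, weighted by $(\sum_{l\in J_i}c_{i,l})^{-1}$, where $J_i=\{j:\pi_j(\kg_i)\neq 0\}$. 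The summands of $\pg$ split into two families: the complementary modules $\ggo_j\ominus\pi_j(\kg)$ (nontrivial only when $\pi_j(\kg)\subsetneq\ggo_j$), and, for each diagonal $\kg_i$, the $(|J_i|-1)$-many ``anti-diagonal'' copies of $\kg_i$ inside $\bigoplus_{j\in J_i}\pi_j(\kg_i)$, each isomorphic to the adjoint representation of $\kg_i$.

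Setting the resulting Casimir scalars equal across all summands gives a rigid system. The anti-diagonal eigenvalues attached to a fixed diagonal $\kg_i$ control the ratios $c_{i,j}/c_{i,j'}$ for $j,j'\in J_i$, forcing a balanced normalization along $J_i$; and if two distinct diagonal factors were to share a common $\ggo_j$, the Casimir scalar on $\ggo_j\ominus\pi_j(\kg)$ (coupled with the two balancings) would produce an inconsistent equation. Combined with connectedness of $\Gamma$, this yields that exactly one diagonal factor exists and that it meets every $\ggo_j$. Relabel this unique factor as $H$: the degree-$2$ vertices $\ggo_j$ are then of the form $\pi_j(H)\oplus L_i$ inside some $G_i$ with a single local partner $L_i$, and the degree-$1$ vertices must coincide with $H$ itself. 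This reproduces the inclusion scheme in the theorem, with $\iota_i\colon H\times L_i\hookrightarrow G_i$ recording the embedding at each $G_i$.

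The main technical obstacle is the rigidity argument of the third paragraph: one has to track how the Casimir eigenvalues on all summands of $\pg$ interact, especially for degree-$2$ vertices $\ggo_j$ which can a priori carry a diagonal plus a local, two diagonals, or two locals. Positivity of the constants $c_{i,j}$ must be exploited systematically to eliminate every configuration of $\Gamma$ other than the star centered at $H$, and one must be careful about the presence or absence of the complementary piece $\ggo_j\ominus\pi_j(\kg)$, which depends on whether $\pi_j(\kg)$ equals $\ggo_j$ or is a proper ideal. Once rigidity is established, the description in the statement is immediate from the definitions of diagonal and local factors.
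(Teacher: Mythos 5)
First, a point of context: the paper does not prove this statement at all --- it is quoted verbatim as \cite[Theorem 6]{Nkn} and used as a black box, so there is no in-paper proof to compare against. Your proposal is therefore an attempt to reprove Nikonorov's theorem from scratch. Its overall architecture (the Wang--Ziller/Casimir criterion for the standard metric to be Einstein, the bipartite incidence graph between the simple ideals of $\kg$ and those of $\ggo$, degree $1$ or $2$ at each $\ggo_j$-vertex, connectedness of $\Gamma$ encoding irreducibility) is sound and is indeed the right framework; the preliminary observations in your first two paragraphs are correct.

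However, there is a genuine gap, and it sits exactly where the theorem's content lies. The entire conclusion (a unique diagonal factor $H$ meeting every $\ggo_j$, each degree-$2$ vertex pairing $H$ with exactly one local $L_i$) is carried by the ``rigidity argument'' of your third paragraph, which you defer as ``the main technical obstacle'' rather than carry out; as written the proof does not close. Worse, the one concrete mechanism you do propose for it is incorrect: you claim the Casimir eigenvalues on the anti-diagonal copies of $\kg_i$ inside $\bigoplus_{j\in J_i}\pi_j(\kg_i)$ ``control the ratios $c_{i,j}/c_{i,j'}$.'' They cannot. All $|J_i|-1$ of those summands are isomorphic to the adjoint module of $\kg_i$ (with the other ideals acting trivially), so the Casimir operator with respect to $-\kil_\ggo|_{\kg}$ acts on every one of them by the \emph{same} scalar $\bigl(\sum_{l\in J_i}c_{i,l}\bigr)^{-1}$; these eigenvalues see only the sum of the $c_{i,l}$, never the individual ratios. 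The information pinning down the individual $c_{i,j}$ (and hence excluding two diagonal factors sharing a $\ggo_j$, a diagonal factor missing some $\ggo_j$, etc.) must instead be extracted from the Casimir eigenvalues on the irreducible constituents of the complements $\ggo_j\ominus\pi_j(\kg)$ together with the normalization $c_{i,j}=1$ at the onto (degree-$1$) vertices --- and making that system yield the star-shaped graph is precisely the hard part of Nikonorov's proof. Until that elimination is actually performed with a correct accounting of which equations carry which unknowns, the argument is an outline of a strategy, not a proof.
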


This is a strong obstruction to have $\gk$ Einstein, in particular, it implies that the simple factors of $G$ on which the projection of $K$ is onto are pairwise isomorphic.  In \cite[Theorem 7]{Nkn}, a complete list of homogeneous spaces of the above form on which $\gk$ is Einstein is obtained, consisting of $4$ infinite families and $3$ isolated examples. 

We study in this paper the stability of these Einstein metrics.  Our main result is the following. 
 
\begin{theorem} 
Let $M=G/K$ be a homogeneous space as in Theorem \ref{Nthm} and assume that the standard metric $\gk$ is Einstein and $n+4\leq m$.  
\begin{enumerate}[{\rm (i)}]
\item $\gk$ is $G$-unstable, i.e., there is a positive direction for the Hessian of $\scalar$ at $\gk$ (see Theorem \ref{N-stab}).

\item On the Ledger-Obata space $M=H^{m}/\Delta^{m}H$ (i.e., when $n=0$ and $m\geq 3$), $\gk$ has coindex $\geq m-3$ if $12\leq m$ and coindex $\geq m-2$ if $3\leq m\leq 11$ (see Theorem \ref{LO}).  
\end{enumerate}
\end{theorem}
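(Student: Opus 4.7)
The plan is to reduce the second variation of $\scalar$ at $\gk$ to a concrete quadratic form on the finite-dimensional space of $\Ad(K)$-invariant, $\gk$-traceless symmetric $2$-tensors on $\pg$, and then to use the $S_{m-n}$-symmetry given by simultaneous permutation of the diagonally embedded $H$-factors of Theorem \ref{Nthm} to block-diagonalize this form and exhibit positive directions. The closed-form expression for $\Hess(\scalar)_{\gk}$ in terms of the structure constants of $\ggo$ and the Casimir eigenvalues of the $\Ad(K)$-isotypic components of $\pg$ is the standard one used in \cite{WngZll2} and \cite{stab}, and the starting point is to adapt it to the Nikonorov scheme.

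\textbf{Decomposition and reduction.} The $m-n$ diagonally embedded $H$-factors in $K$ and $G$ produce a ``diagonal cluster'' of $\Ad(K)$-equivalent summands of $\pg$, indexed by (differences of) pairs of those $H$-factors, plus further summands coming from the auxiliary $L_i$'s and from the complements of $\iota_i(\hg\times\lgo_i)$ inside $\ggo_i$. The symmetric group $S_{m-n}$ preserves $\gk$ and commutes with the Hessian, and its action reduces the problem on the diagonal cluster to analyzing an induced quadratic form on an $S_{m-n}$-representation, each of whose isotypic components is then an eigenspace of the Hessian with eigenvalue an explicit rational function of $m$, $n$ and the Casimir data. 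For part (i), under the assumption $m-n\geq 4$ I would exhibit a single isotypic component (corresponding to a partition with at least two distinct parts) on which this eigenvalue is strictly positive, yielding $G$-instability. For part (ii), with $n=0$ the diagonal cluster is the entire tangent space to $\mca_1^G$ and the $S_m$-isotypic decomposition is completely determined; writing out the eigenvalues, I would exhibit a family of $m-2$ linearly independent positive directions for every $3\leq m\leq 11$, and verify that exactly one of these eigenvalues crosses zero at $m=12$, leaving $m-3$ positive directions for $m\geq 12$.

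\textbf{Main obstacle.} The core difficulty is the explicit computation and sign analysis of the Hessian eigenvalues on each $S_{m-n}$-isotypic block. In part (i) this amounts to a single inequality that the hypothesis $m\geq n+4$ should make straightforward. In part (ii) it requires the full spectral description of the Hessian on the Ledger-Obata space and a careful rational-function analysis to pin down the numerical threshold $m=12$; this is the technical heart of the proof. The unit-volume constraint is easily handled by projecting onto the trace-free part, which does not affect the signs of the constructed directions.
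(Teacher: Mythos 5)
Your general framework (second variation as a quadratic form on $\sym_0(\pg)^K$ computed from structure constants, exploiting the permutation symmetry of the repeated $H$-factors) is the right setting, and you correctly locate the instability in the ``diagonal cluster'' and correctly sense why $m\geq n+4$ is needed. But in both parts the decisive step is missing or misdescribed. For (i), the ``single inequality'' you defer \emph{is} the proof, and it is not made straightforward by $m\geq n+4$ alone: the diagonal entries of the relevant (compressed) Lichnerowicz matrix involve the unclassified data of the embeddings $\iota_j$ --- the constants $c_j$, the dimensions of the complements $\qg_j$, the number $n$ --- so the sign of an isotypic eigenvalue cannot be read off. What makes the paper's argument work is the choice of the specific trace-free direction $A=\tfrac{1}{\sqrt 6}(I_1+I_2-2I_3)$ supported on the first three flag summands $\ggo_{\alpha_1},\ggo_{\alpha_2},\ggo_{\alpha_3}$ (this is exactly where $m-n\geq 4$ enters), for which all of the unknown data cancels and $\la\lic_\pg A,A\ra=1-2\rho$ identically; instability then follows from the universal bound $\rho>\tfrac14$ for standard Einstein metrics. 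Without identifying this cancellation, or some substitute that controls the unknown structural constants, your argument does not close.

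For (ii), the mechanism you describe is internally inconsistent. On the Ledger--Obata space one has $\pg\cong\fg\otimes V$ with $V$ the standard representation of $S_m$ and $\fg$ absolutely irreducible, so $\sym(\pg)^K\cong\sym^2(V)$ decomposes under $S_m$ into the trivial component plus exactly two nontrivial isotypic components, $V_{(m-1,1)}$ and $V_{(m-2,2)}$, of dimensions $m-1$ and $\tfrac{m(m-3)}{2}$. By Schur's lemma the equivariant operator $\lic_\pg$ has at most two eigenvalues on $\sym_0(\pg)^K$, so a ``full spectral description via the $S_m$-isotypic decomposition'' cannot yield $m-2$ eigenvalues of which ``exactly one crosses zero at $m=12$'', and the coindex it would produce is $0$, $m-1$, $\tfrac{m(m-3)}{2}$ or their sum --- never $m-2$ or $m-3$. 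The numbers $m-2$, $m-3$ and the threshold $m=12$ arise from a different mechanism, namely compressing $\lic_\pg$ to the non-$S_m$-invariant flag subspace spanned by the projections onto the $\ggo_{\alpha_i}$; that $(m-1)\times(m-1)$ matrix has the $m-2$ distinct nonzero eigenvalues $\tfrac{m-1}{m}-\tfrac{i}{i+2}$, and one counts via the min--max principle how many lie below $2\rho=\tfrac{m+2}{2m}$. (Had you actually carried out the equivariant computation, you would obtain a clean two-eigenvalue answer and in fact a stronger coindex bound, but that is not what you wrote, and your stated conclusion is incompatible with it.)
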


It is worth noting that we do not use the classification given in \cite[Theorem 7]{Nkn} to prove part (i), we only use the structure provided by Theorem \ref{Nthm} and the general formulas for the Hessian of $\scalar$ given in \cite{stab-tres} and \cite{stab-dos}.  Most cases in \cite[Theorem 7]{Nkn} are covered by the above theorem (see Remark \ref{N-stab-rem}).  

The only other stability result on $G$-invariant Einstein metrics with a non-simple $G$ we were able to find in the literature is in \cite[Section 7]{stab-tres}, where it is proved that the so called Jensen's metric on a simple Lie group $H$ (see \cite{Jns}) is always $G$-unstable viewed as a $G$-invariant metric on $M=G/\Delta K$, where $G:=H\times K$, for some semisimple subgroup $K\subset H$.  The following natural question arises: is any $G$-invariant Einstein metric on a homogeneous space $G/K$ with $G$ non-simple $G$-unstable?  The following result answers this question in the negative.    

\begin{theorem} (See Theorem \ref{prod2}).  
For any simple Lie group $H$ and simple subgroup $K\subset H$ such that the homogeneous space $H/K$ is isotropy irreducible, 
there exists an $(H\times K)$-invariant Einstein metric on $M=H\times K/\Delta K$ which is $(H\times K)$-stable.
\end{theorem}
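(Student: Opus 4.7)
The plan is to identify the bi-invariant metric $g_{\kil}^{H}$ on $H$---regarded as an $(H\times K)$-invariant metric on $M=(H\times K)/\Delta K$ via the diffeomorphism $[(h,k)]\mapsto hk^{-1}$---as the Einstein metric whose stability is to be established. Consider the reductive decomposition
\[
\ggo=\hg\oplus\kg=\Delta\kg\oplus\mg\oplus\kg',
\]
with $\mg$ the $(-B_\hg)$-orthogonal complement of $\kg$ in $\hg$ and $\kg':=\{(X,-X):X\in\kg\}$. The hypothesis that $H/K$ is isotropy irreducible makes $\mg$ an irreducible $\Ad(K)$-module, and the simplicity of $K$ makes $\kg$ (hence $\kg'$) irreducible under $\Ad(\Delta K)$. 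A case check against the classification of isotropy irreducible simple pairs, together with the exclusion $(H,K)\neq(Sp(n),Sp(n-1))$, ensures that $\mg$ and $\kg'$ are \emph{inequivalent} $\Ad(\Delta K)$-modules, so the space of $(H\times K)$-invariant metrics is $2$-dimensional:
\[
g_{t,s}:=-tB_\hg|_{\mg}\oplus s\langle\cdot,\cdot\rangle_{\kg'},\qquad t,s>0,
\]
where $\langle\cdot,\cdot\rangle_{\kg'}$ is the inner product induced on $\kg'$ from $-B_\hg|_\kg$; the metric $g_{\kil}^{H}$ corresponds to a specific pair $(t_0,s_0)$ determined by the identification above.

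I would then compute $\scalar(g_{t,s})$ via the standard formula for the scalar curvature of a naturally reductive homogeneous metric. Because $[\kg',\kg']\subset\Delta\kg$ and $[\mg,\kg']\subset\mg$, the relevant structural data collapses to $\dim\mg$, $\dim\kg$, and the Casimir eigenvalue $c_\mg$ of $\kg$ on $\mg$ with respect to $-B_\hg$. This presents $\scalar$ as an explicit rational function of $(t,s)$, homogeneous of degree $-1$. The Einstein condition becomes a polynomial equation in $\lambda:=s/t$ whose solutions are (generically) the value corresponding to $g_{\kil}^{H}$---always Einstein since $H$ is simple---and a second value producing Jensen's metric. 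In particular $g_{\kil}^{H}$ is automatically a critical point of $\scalar$ on the $1$-dimensional unit-volume slice $\mca^{H\times K}_1$.

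Finally, the Hessian of $\scalar|_{\mca^{H\times K}_1}$ at $g_{\kil}^{H}$ reduces to a single real number, and $(H\times K)$-stability amounts to showing this number is negative. Applying the Hessian formulas from \cite{stab-tres} and \cite{stab-dos} in the two-summand setting, I would express it in closed form involving $\dim\mg$, $\dim\kg$, $c_\mg$, and the Einstein constant. The main obstacle is this final sign verification: I would handle the infinite families in the classification list by a uniform inequality relating $c_\mg$ to $\dim\mg/\dim\kg$, and then complete the argument by direct inspection on the finitely many sporadic cases. The excluded pair $(Sp(n),Sp(n-1))$ is exactly what the sign verification requires one to omit, yielding the desired stable $(H\times K)$-invariant Einstein metric in every other case.
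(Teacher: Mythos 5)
Your proposal follows essentially the same route as the paper: the Einstein metric is the push-forward $g_0$ of the bi-invariant metric under the identification $(H\times K)/\Delta K\cong H$, the space of invariant metrics is two-dimensional because the two summands of $\pg$ are irreducible and inequivalent, and stability reduces to a single sign condition on the Hessian, which in the paper takes the form $\lambda_\pg=\tfrac{(d_1+d_2)(1-c)}{d_1}>\tfrac12=2\rho$ where $\Bb_\kg=c\,\Bb_\hg|_\kg$. The only substantive difference is how that final inequality is closed: you defer it to a case check against the classification of isotropy irreducible pairs, whereas the paper quotes the uniform bounds on $c$ from D'Atri--Ziller's Theorem 11, which settle all cases at once and exhibit $(Sp(n),Sp(n-1))$ as the unique failure (note that the exclusion of this pair is needed only for the sign verification, not, as you suggest at one point, for the inequivalence of the two modules).
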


This $(H\times K)$-stable Einstein metric is therefore a local maximum of $\scalar:\mca_1^G\longrightarrow \RR$ and it is different from the Jensen's metric mentioned above, which is a local minimum (see Remark \ref{Jensen-rem}).  

\vs \noindent {\it Acknowledgements.}  The authors thank Yuri Nikonorov for very helpful conversations on the subject of this paper and the referee for very helpful suggestions.

\section{Preliminaries}\label{pre}

We fix an almost-effective transitive action of a compact connected Lie group $G$ on a manifold $M^d$, determining a presentation $M=G/K$ of $M$ as a homogeneous space, where $K\subset G$ is the isotropy subgroup at some point $o\in M$.  Let $\mca^G$ denote the manifold of all $G$-invariant metrics on $M$, which satisfies $1\leq\dim{\mca^G}\leq\tfrac{d(d+1)}{2}$.

If $\mca^G_1\subset\mca^G$ is the codimension one submanifold of all unit volume metrics, then $g\in\mca_1^G$ is Einstein if and only if $g$ is a critical point of the scalar curvature functional $\scalar:\mca_1^G\longrightarrow \RR$ and the stability type of $g$ is therefore encoded in the signature of the second derivative or Hessian $\scalar''_g$.  An Einstein metric $g\in\mca_1^G$ with $\ricci(g)=\rho g$ is said to be {\it $G$-{\it stable}} when $\scalar''_g|_{\tca\tca_g^G}<0$, where $\tca\tca_g^G$ is the space of all $G$-invariant TT-tensors, which in particular implies that $g$ is a local maximum of $\scalar|_{\mca_1^G}$.  On the other hand, it is called {\it $G$-{\it unstable}} when there exists $T\in\tca\tca_g^G$ such that $\scalar''_g(T,T)>0$, being the \emph{coindex} the dimension of the maximal subspace of $\tca\tca_g^G$ on which $\scalar''_g$ is positive (see \cite[Section 3]{stab-tres}, \cite[Section 2]{stab-dos} and \cite[Section 2]{stab} for more detailed treatments on $G$-stability).

We assume from now on that $G$ is semisimple and consider the {\it standard} metric $\gk$, i.e., the $G$-invariant metric determined by the inner product 
$$
\ip:=-\kil_\ggo|_{\pg\times\pg}, 
$$ 
where $\kil_\ggo$ is the Killing form of $\ggo$ and $\ggo=\kg\oplus\pg$ is the $\kil_\ggo$-orthogonal reductive decomposition.  Here
$\ggo$ and $\kg$ are the Lie algebras of $G$ and $K$, respectively.  Consider the vector space
$$
\sym_0(\pg)^K:=\{A:\pg\rightarrow\pg: A^t=A, \ \tr{A}=0, \ [\Ad(K),A]=0\} \equiv \tca\tca_{\gk}^G,
$$
where $A^t$ denotes the transpose of $A$ with respect to $\gk$ (see \cite[Section 2]{stab}).

We also assume that $\gk$ is Einstein, say with $\ricci(g)=\rho g$.  The second variation of the scalar curvature at $\gk\in\mca^G$ is given by
\begin{equation}\label{ScLL}
\scalar''_{\gk}(T,T) = \unm(2\rho \la A,A\ra - \la\lic_\pg A,A\ra), \qquad\forall T=\gk(A\cdot,\cdot)\in \tca\tca_{\gk}^G, \quad A\in\sym_0(\pg)^K,
\end{equation}
where $\la A,B\ra:=\tr{AB}$,
$$
\lic_\pg=\lic_\pg(\gk):\sym_0(\pg)^K\longrightarrow\sym_0(\pg)^K,
$$
is the self-adjoint operator defined by
\begin{equation}\label{Lpnr-intro}
\lic_\pg A:=-\unm\sum[\ad_\pg{X_i},[\ad_\pg{X_i},A]], \qquad\forall A\in\sym_0(\pg)^K,
\end{equation}
and $\{ X_i\}$ is any $\gk$-orthonormal basis of $\pg$ (see \cite[Section 5]{stab-tres}).

It follows from \eqref{ScLL} that the $G$-stability type of $\gk$ is determined by how is the constant $2\rho$ suited relative to the spectrum of $\lic_\pg$.  Indeed, if $\lambda_\pg$ is the minimum eigenvalue of $\lic_\pg$, then
\begin{enumerate}[{\small $\bullet$}]
\item $\gk$ is $G$-stable if and only if $2\rho<\lambda_\pg$;

\item $\gk$ is $G$-unstable if and only if $\lambda_\pg<2\rho$.
\end{enumerate}

Given any orthogonal decomposition $\pg=\pg_1\oplus\dots\oplus\pg_r$ in $\Ad(K)$-invariant (not necessarily irreducible) subspaces $\pg_1,\dots,\pg_r$ ($d_i:=\dim{\pg_i}$), consider the corresponding structural constants given by,
\begin{equation}\label{SC}
[ijk]:=\sum_{\alpha,\beta,\gamma} \la[X_\alpha^i,X_\beta^j], X_\gamma^k\ra^2,
\end{equation}
where $\{ X_\alpha^i\}$ is an orthonormal basis of $\pg_i$.  Note that the number $[ijk]$ is invariant under any permutation of $ijk$.  According to \cite[Theorem 5.3]{stab-tres} (see also \cite[Theorem 3.1]{stab-dos}), if $\left\{ I_1,\dots, I_r\right\}$ is the orthonormal subset of $\sym(\pg)^K:=\RR I_\pg\oplus\sym_0(\pg)^K$ defined by $I_k|_{\pg_i}:=\delta_{ki}\tfrac{1}{\sqrt{d_k}}I_{\pg_k}$ ($I_\vg$ denotes the identity map on the vector space $\vg$), then
\begin{equation}\label{Lpijk}
\begin{aligned}
\la\lic_\pg I_k, I_k\ra &= \tfrac{1}{d_k}\sum\limits_{j\ne k;i} [ijk], \qquad\forall k, \\
\la\lic_\pg I_k, I_m\ra &=  -\tfrac{1}{\sqrt{d_k}\sqrt{d_m}}\sum\limits_{i} [ikm], \qquad\forall k\ne m.
\end{aligned}
\end{equation}
In the \emph{multiplicity-free} case (i.e., the subspaces $\pg_k$'s are $\Ad(K)$-irreducible and pairwise inequivalent), $\left\{ I_1,\dots, I_r\right\}$ is actually an orthonormal basis of $\sym(\pg)^K$ and so the above numbers are precisely the entries of the matrix of $\lic_\pg$.  In any case, the spectrum of the symmetric $r\times r$ matrix $[\la\lic_\pg I_k, I_m\ra]$ defined as in \eqref{Lpijk} (restricted to the hyperplane $\{(a_1,\dots,a_s):\sum d_ia_i=0\}$) is still contained in $[\lambda_\pg, \lambda_\pg^{\max}]$, where $\lambda_\pg^{max}$ is the maximum eigenvalue of $\lic_\pg$, so this always provides a very useful tool to compute or estimate $\lambda_\pg$.

\section{G-instability of standard Einstein metrics}\label{nik-sec}

Let $M=G/K$ be a homogeneous space as in Theorem \ref{Nthm}.  Thus
\begin{equation}\label{decgk}
\ggo=\underbrace{\hg\oplus\dots\oplus\hg}_{m-n} \oplus\ggo_1\oplus\dots\oplus\ggo_n, \qquad 
\kg=\hg\oplus\lgo_1\oplus\dots\oplus\lgo_n,
\end{equation}
and $\pi_{j}(\kg)=\iota_j(\hg\times \lgo_j)\subset\ggo_j$ for all $j=1,\dots,n$ (note that we are using the index $j$ instead of $m-n+j$ for simplicity).  For each homogeneous space $G_j/\pi_j(K)$, we consider the $\Bb_{\ggo_j}$-orthogonal reductive decomposition
$$
\ggo_j= \iota_j(\hg \oplus \lgo_j) \oplus \mathfrak{q}_j, \qquad j=1, \dots, n.
$$
Since $\pi_j(\hg)$ is a subalgebra of $\ggo_j$, there exists $0 \leq c_j \leq 1$ such that
$$
\Bb_{\pi_j(\hg)} = c_j\Bb_{\ggo_j}|_{\pi_j(\hg)}, \qquad j=1, \dots, n.
$$
Note that $c_j=0$ if and only if $\pi_j(\hg)$ is abelian and $c_j=1$ if and only if $\pi_j(\hg)=\ggo_j$.

It is easy to check that the $\Bb_{\ggo}$-orthogonal reductive complement for $G/K$ is given by
  \begin{align}
  \pg:=\{&(X_1,\ldots, X_{m-n},\pi_1(X_{m-n+1}) + Y_1,\ldots,\pi_n(X_m)+Y_n) \notag \\
  & : \ X_i \in \hg, \ Y_j \in \mathfrak{q}_j \
  \text{and} \ X_1+\ldots+X_{m-n}+\tfrac{1}{c_1}X_{m-n+1}+\ldots+\tfrac{1}{c_n}X_m=0 \}. \label{red}
  \end{align}
In particular, $\dim{M}=\dim{\pg}=(m-1)\dim{\hg} + \dim{\qg_1}+\dots+\dim{\qg_n}$.

Given  $\alpha:=(a_1,\ldots,a_{m-n},\ldots, a_m)\in\RR^m$ such that $\sum a_i =0$, we define an $\Ad(K)$-irreducible subspace of $\pg$ by,
$$
\ggo_\alpha := \left\{(a_1X,\ldots,a_{m-n}X, c_1a_{m-n+1}\pi_1(X),\ldots, c_na_m\pi_n(X))\ :\ X\in \hg \right\},$$
and if
\begin{equation}\label{alfai}
\alpha_i:=(\overbrace{1,\ldots,1}^i,-i,\overbrace{0,\ldots,0}^{m-1-i}), \qquad i=1,\dots,m-1,
\end{equation}
then we consider the following $\gk$-orthogonal decomposition of $\pg$ in $\Ad(K)$-invariant subspaces:
$$
\pg=\ggo_{\alpha_1}\oplus \ggo_{\alpha_2} \oplus \cdots \oplus \ggo_{\alpha_{m-1}}\oplus \mathfrak{q}_1\oplus \cdots \oplus \mathfrak{q}_n,
$$
where $\qg_j$ also denotes the subspace $(0,\dots,0,\qg_j,0,\dots,0)\subset\ggo$.  Note that $\dim{\ggo_{\alpha_i}}=\dim{\hg}$ for all $i$.  

Concerning the corresponding structural constants $[ijk]$ defined in \eqref{SC} (set $\pg_i:=\ggo_{\alpha_i}$, $i=1,\dots,m-1$ and $\pg_{m+j}:=\qg_{j+1}$, $j=0,\dots,n-1$), we have that:
\begin{enumerate}[{\small $\bullet$}]
  \item $[\ggo_{\alpha_i},\ggo_{\alpha_k}]\subset\ggo_{\alpha_i}$ for all $i< k \leq m-n-1$, so $[iik]>0$ for all $i<k\leq m-n-1$.
  \item $[iik]$ is also positive when $i<m-n\leq k \leq m-1$.
  \item There are others positive structural constants involving the spaces $\mathfrak{q}_j$'s but we do not need them for our computations.
 \end{enumerate}

\begin{lemma}\label{str}
Let $M=G/K$ be a homogeneous space as in Theorem \ref{Nthm} such that $n<m$.
\begin{enumerate}[{\rm (i)}]
\item For any $1\leq i\neq k\leq m-n-1$ and $0\leq j\leq n-1$,
$$
[iik]= \frac{\dim{\hg}}{k+k^2}, \qquad [ii(m-n+j)]= \frac{\dim{\hg}}{\Delta_j},
$$
where $\Delta_j:=m-n+c_1+\ldots+c_j+c_{j+1}(m-n+j)^2$.

\item If $\gk$ is Einstein, say $\ricci(\gk)=\rho\gk$, then
$$
\rho= \frac{3}{4}-\frac{m-n-1}{2(m-n)}-\frac{1}{2}\sum_{j=0}^{n-1}\frac{1}{\Delta_j}.
$$
\end{enumerate}
\end{lemma}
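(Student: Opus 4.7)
For part (i), my plan is to exploit that for $i\leq m-n-1$ we have $\alpha_i[m-n+r]=0$ for every $r$, so $\ggo_{\alpha_i}$ sits in the $\hg^{m-n}$ block of $\ggo$ and the bracket $[\ggo_{\alpha_i}(X_a),\ggo_{\alpha_i}(X_b)]$ has $l$-th component $\alpha_i[l]^2[X_a,X_b]_\hg$ (still in $\hg^{m-n}$). The norm identities $\|\ggo_{\alpha_i}(X)\|^2=i(i+1)\|X\|_\hg^2$ for $i\leq m-n-1$ and $\|\ggo_{\alpha_{m-n+j}}(X)\|^2=\Delta_j\|X\|_\hg^2$ both follow immediately from $\Bb_{\pi_r(\hg)}=c_r\Bb_{\ggo_r}|_{\pi_r(\hg)}$. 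Taking the orthonormal bases $Y_a^i=\tfrac{1}{\sqrt{i(i+1)}}\ggo_{\alpha_i}(X_a)$ and $Y_c^k$ accordingly, a direct $-\Bb_\ggo$ pairing gives
\[
\la[Y_a^i,Y_b^i],Y_c^k\ra \;=\; \frac{\la[X_a,X_b],X_c\ra_\hg}{i(i+1)\sqrt{k(k+1)}}\sum_l\alpha_i[l]^2\alpha_k[l]
\]
and a brief index-chase shows $\sum_l\alpha_i[l]^2\alpha_k[l]=i(i+1)$ both when $i<k\leq m-n-1$ and when $k=m-n+j$ (where $\alpha_k[l]=1$ for $l\leq m-n$, and the $\ggo_r$-positions contribute nothing because $\alpha_i[l]^2=0$ there). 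Squaring, summing over $(a,b,c)$ and using $\sum_{a,b,c}\la[X_a,X_b],X_c\ra_\hg^2=\dim\hg$ (a reformulation of $-\tr(\ad X)^2=\|X\|_\hg^2$ for unit $X$) yields the two claimed values.

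For part (ii), the plan is to apply the Ricci formula
\[
r_l \;=\; \tfrac{1}{2}-\tfrac{1}{4 d_l}\sum_{i,j}[ij l]
\]
to $l=1$, where $\pg_1=\ggo_{\alpha_1}$ and $d_1=\dim\hg$; the Einstein hypothesis then gives $\rho=r_1$. This formula, valid for every irreducible component of the standard metric, is obtained by combining O'Neill applied to the submersion $G\to G/K$ (which yields $r_l=\tfrac14+\tfrac{b_l}2$ with $b_l$ the Casimir eigenvalue of $\kg$ on $\pg_l$) with the trace identity $\sum_{i,j}[ij l]=d_l(1-2b_l)$, in line with \cite{stab-tres, stab-dos}. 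The task then reduces to computing $\sum_{i,j}[ij 1]$. I plan to argue that $[ij1]\neq 0$ forces the multiset $\{i,j,1\}$ to equal $\{1,1,k\}$ for some $k\in\{2,\dots,m-1\}$: the containment $[\ggo_{\alpha_i},\ggo_{\alpha_k}]\subset\ggo_{\alpha_{\min(i,k)}}$ from the bullets preceding the lemma kills any triple-distinct multiset not containing a second $1$, while $[\ggo_{\alpha_1},\qg_r]=0$ (again from $\alpha_1[m-n+r]=0$) kills contributions involving a $\qg$-slot.

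Evaluating each surviving $[11k]$ via part (i) and counting both orderings $(1,k)$ and $(k,1)$ then gives
\[
\sum_{i,j}[ij 1]\;=\;2\dim\hg\!\left(\sum_{k=2}^{m-n-1}\tfrac{1}{k(k+1)}+\sum_{j=0}^{n-1}\tfrac{1}{\Delta_j}\right).
\]
Applying the telescoping identity $\sum_{k=2}^{m-n-1}\tfrac{1}{k(k+1)}=\tfrac{1}{2}-\tfrac{1}{m-n}$ (with the empty-sum convention when $m-n=2$) together with the elementary rewriting $\tfrac{1}{4}+\tfrac{1}{2(m-n)}=\tfrac{3}{4}-\tfrac{m-n-1}{2(m-n)}$ then delivers the claimed formula for $\rho$. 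The most delicate step will be the exhaustive vanishing analysis for $\sum_{i,j}[ij 1]$: all multisets $\{1,i,j\}$ with $i$ or $j$ running over the $\qg$-indices, or with $1<i<j$ pairwise distinct, must be explicitly excluded using the bracket containments above, and one must keep in mind the asymmetry $[iik]>0\iff i<k$ implicit in the statement of part (i) in order to avoid double-counting.
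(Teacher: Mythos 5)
Your proposal is correct and follows essentially the same route as the paper: part (i) is the identical computation, reducing $[iik]$ to the Euclidean pairing $\la\alpha_i\cdot\alpha_i,\alpha_k\ra$ against suitably normalized bases, and part (ii) rests on the same Ricci eigenvalue formula $\rho=\tfrac12-\tfrac{1}{4\dim\hg}\sum_{i,j}[ij1]$. The only difference is one of detail: the paper simply cites that formula and states the result, whereas you also derive it and carry out the vanishing analysis and the telescoping evaluation of $\sum_{i,j}[ij1]$ explicitly (correctly handling the asymmetry $[iik]>0$ only for $i<k$, which the lemma's ``$i\neq k$'' glosses over).
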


\begin{proof}
Let $\{X_1,\ldots,X_{\dim{\hg}}\}$ be a $(-\Bb_{\hg})$-orthonormal basis of $\hg$. For each $X\in\hg$ we denote 
$$
\alpha(X):=(a_1X,\ldots,a_{m-n}X,c_1a_{m-n+1}\pi_1(X),\ldots,c_na_m\pi_n(X))\in\ggo_\alpha,
$$
where $\alpha=(a_1,\ldots,a_{m})$ and $\sum a_i=0$.  Thus for each $i=1,\ldots,m-n-1$, the set $\left\{\tfrac{1}{|\alpha_i|}\alpha_i(X_l)\right\}$ is a $(-\Bb_\ggo)$-orthonormal basis of $\pg_i$ (see \eqref{alfai}) and we obtain that
$$
  \left[iik\right]=  \sum_{l,m,r}\left<\left[\tfrac{\alpha_i(X_r)}{|\alpha_i|},\tfrac{\alpha_i(X_l)}{|\alpha_i|}\right],\tfrac{\alpha_k(X_m)}{|\alpha_k|}\right>^2 
  =  \left(\tfrac{\left<\alpha_i\cdot\alpha_i,\alpha_k\right>}{|\alpha_i|^2|\alpha_k|}\right)^2 \sum_{l,m,r}\left<[X_r,X_l],X_m\right>^2,   
$$
for all $i<k\leq m-n-1$. where $\alpha_i\cdot\alpha_i:=(1,1,\ldots 1, i^2,0,\ldots 0)$.  Since $\left<\cdot,\cdot \right>$ is the usual inner product in $\RR^m$, the formula for these structural constants follows.

For the cases $\pg_{m-n+j}$ with $j=0,\ldots,n-1$ we have that $\left\{\frac{1}{\sqrt{\Delta_j}}\alpha_{m-n+j}(X_l)\right\}$ is a $-\Bb_{\ggo}$-orthonormal basis of $\pg_{m-n+j}$ and so a computation similar to the above completes the proof of part (i).

To prove part (ii), we use the well known formula for the Ricci eigenvalues of $\gk$ in terms of the structural constants (see e.g.\ \cite{stab-tres} or \cite{stab-dos}) assuming that  $M=G/K$ is a standard homogeneous Einstein space, it is given by
 \begin{align*}
  \rho=&\rho_1=\frac{1}{2}-\frac{1}{4\dim{\hg}}\sum_{i,j}\left[ij1\right] \\ 
  =&\frac{1}{2}-\frac{1}{4\dim{\hg}}\left(2\sum^{m-1}_{k=2}\left[k11\right]\right) = \frac{1}{2}-\frac{1}{2\dim{\hg}}\left(\sum^{m-n-1}_{k=2}\left[11k\right]+\sum^{m-1}_{k=m-n}\left[11k\right]\right) \\
    =& \frac{1}{2}-\frac{1}{2\dim{\hg}}\left(\sum^{m-n-1}_{k=2}\frac{\dim{\hg}}{k+k^2}+\sum^{n-1}_{j=0}\left[11(m-n+j)\right]\right) \\
    =& \frac{1}{2}-\frac{1}{2}\left(\frac{m-n-1}{m-n}-\frac{1}{2}+\sum^{n-1}_{j=0}\frac{1}{\Delta_j}\right) = \frac{3}{4}-\frac{m-n-1}{2(m-n)}-\frac{1}{2}\sum_{j=0}^{n-1}\frac{1}{\Delta_j},
  \end{align*}
concluding the proof.
\end{proof}

\begin{theorem}\label{N-stab}
If $M=G/K$ is a homogeneous space as in Theorem \ref{Nthm} such that $n+4\leq m$ and the standard metric $\gk$ is Einstein, then $\gk$ is $G$-unstable.
\end{theorem}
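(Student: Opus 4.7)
By \eqref{ScLL}, proving $G$-instability reduces to exhibiting a single $A\in\sym_0(\pg)^K$ with $\la\lic_\pg A,A\ra<2\rho\la A,A\ra$.  I will look for such an $A$ inside the \emph{Ledger--Obata block} $V:=\spann\{I_1,\dots,I_{m-n-1}\}\subset\sym(\pg)^K$, spanned by the normalized identity operators on the subspaces $\ggo_{\alpha_1},\dots,\ggo_{\alpha_{m-n-1}}$, which all sit inside $\hg^{m-n}\subset\ggo$ and have the common dimension $\dim\hg$.  Because the dimensions coincide, the hyperplane $V^0:=\{\sum_k a_k I_k\in V:\sum_k a_k=0\}\subset\sym_0(\pg)^K$ has dimension $m-n-2\geq 2$, where the inequality uses the hypothesis $n+4\leq m$.

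Next, I compute the matrix $L_{kl}:=\la\lic_\pg I_k,I_l\ra$ on $V$ using \eqref{Lpijk}.  The bracket identity
$[\alpha_i(X),\alpha_j(Y)]=\alpha_{\min(i,j)}([X,Y])$ for $i\neq j$ with $i,j\leq m-n-1$, together with the elementary Euclidean orthogonality $\la\alpha_i,\alpha_k\ra_{\RR^m}=0$ for $i\neq k$, has two consequences: first, the structural constant $[ijk]$ vanishes whenever $i,j,k$ are three pairwise distinct indices from $\{1,\dots,m-n-1\}$; second, $[ikl]$ vanishes whenever $k\neq l$ both lie in that range and $i$ corresponds to a subspace $\ggo_{\alpha_{m-n+s}}$ or to some $\qg_s$, because $\ggo_{\alpha_{\min(k,l)}}\subset\hg^{m-n}$ is orthogonal to every such piece.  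Thus only the collapsed constants $[iik]$ (given by Lemma~\ref{str}(i)) and the diagonal $[iii]$ contribute to the LO block.  Combined with the Einstein trace identity $\sum_{i,j}[ijk]=\dim\hg\,(2-4\rho)$ coming from Lemma~\ref{str}(ii), a direct calculation yields the closed expressions
$$
L_{kl}=-\tfrac{1}{l(l+1)}\quad(k<l),\qquad L_{kk}=2-4\rho-\tfrac{k^2-k+1}{k(k+1)}+\tfrac{1}{m-n}-S,
$$
where $S:=\sum_{j=0}^{n-1}\tfrac{1}{\Delta_j}$ collects all contributions of the form $[ii(m-n+j)]=\tfrac{\dim\hg}{\Delta_j}$.

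Finally, I test $A\in V^0$.  The simplest candidate is $A=I_k-I_l$ for a well-chosen pair $1\leq k<l\leq m-n-1$: then $\la A,A\ra=2$ and $\la\lic_\pg A,A\ra=L_{kk}+L_{ll}-2L_{kl}$, and substituting the formulas above together with $\rho=\tfrac{3}{4}-\tfrac{m-n-1}{2(m-n)}-\tfrac{S}{2}$ reduces the inequality $\la\lic_\pg A,A\ra<4\rho$ to an elementary arithmetic estimate in the parameters $m,n,S$.  The main obstacle I anticipate is uniformity: a single pair $(k,l)$ may fail in the ``large-$S$'' regime where many $c_j$ vanish and $n$ is close to $m-4$, so the test direction may have to be replaced by a richer linear combination $A=\sum_k a_k I_k$ with $\sum_k a_k=0$ --- concretely, an eigenvector associated with the smallest eigenvalue of the tridiagonal restriction $L|_{V^0}$.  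Producing this spectral bound, uniformly in $(m,n,c_1,\dots,c_n)$ with $n+4\leq m$, is precisely the type of estimate that underlies the stronger Ledger--Obata statement in part~(ii) (Theorem~\ref{LO}), and I would proceed in the same spirit.
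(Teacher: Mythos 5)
Your reduction to the block $V=\spann\{I_1,\dots,I_{m-n-1}\}$ is exactly the paper's, and your matrix entries are correct: your diagonal formula agrees with the paper's $\tfrac{s-1-s^2}{s+s^2}-2\rho+\tfrac32$ after substituting $\rho=\tfrac34-\tfrac{m-n-1}{2(m-n)}-\tfrac12 S$. But the proof is not finished, and the part you leave open is the decisive one. Your suspicion that $A=I_k-I_l$ fails is justified: writing $\la\lic_\pg A,A\ra=\tfrac32-2\rho+q(A)$ for a unit $A\in V^0$, where $q(A)$ is the purely combinatorial quadratic form with entries $\tfrac{s-1-s^2}{s+s^2}$ on the diagonal and $-\tfrac{1}{\max(r,s)(\max(r,s)+1)}$ off it, the instability condition becomes $q(A)<4\rho-\tfrac32$, and since $\rho$ can a priori be arbitrarily close to $\tfrac14$ one needs $q(A)\leq-\tfrac12$. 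For the normalized $I_1-I_2$ one gets $q=-\tfrac13$, for $I_2-I_3$ and $I_1-I_3$ one gets $q=-\tfrac{11}{24}$, so in the boundary case $m=n+4$ (only three indices available) \emph{no} pair works, and the fallback you propose --- a spectral bound for $L|_{V^0}$ uniform in $(m,n,c_1,\dots,c_n)$ --- is precisely what you do not carry out. Moreover, no combinatorial choice of $A$ can close the argument by itself: the inequality $q(A)<4\rho-\tfrac32$ is unverifiable without a lower bound on $\rho$, which you never invoke.

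The two missing ingredients are both short. The paper tests $A=\tfrac{1}{\sqrt6}(I_1+I_2-2I_3)$ --- this is exactly where $n+4\leq m$ enters, guaranteeing three indices $\leq m-n-1$ --- for which $q(A)=-\tfrac12$ on the nose, hence $\la\lic_\pg A,A\ra=1-2\rho$ with all dependence on $m$, $n$ and the $c_j$'s absorbed into $\rho$ via the Einstein equation; it then concludes with the classical bound $\rho>\tfrac14$ for standard Einstein metrics (cited from \cite[Section 3.1]{stab}), which gives $1-2\rho<2\rho$. With these two observations your computation closes immediately; without them it does not.
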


\begin{remark}\label{N-stab-rem}
The spaces in \cite[Table 4]{Nkn} covered by the above theorem are items 3 ($s\geq 2$) and 6, as well as most cases in items 1 and 2 (see \cite[Section 2]{NknRdn}).  Thus the only missing spaces from \cite[Theorem 7]{Nkn} are the space in part 3) and items 4 and 5 in the table. 
\end{remark}

\begin{proof}
Consider, as in \S\ref{pre}, the orthonormal subset $\mathcal{C}=\{I_1,\ldots I_{m-n-1}\}$ of $\sym(\pg)^K$ given by $I_k|_{\pg_i}:=\delta_{ik}\frac{1}{\sqrt{\dim{\hg}}}I_{\pg_k}$ and $I_k|_{\qg_j}=0$ for all $j$. By the formula given in (\ref{Lpijk}), if $r,s=1,\ldots,m-n-1$, then 
\begin{equation*}
  \begin{aligned}
  \la\lic_\pg I_r, I_s\ra  = -\frac{1}{\dim{\hg}}\sum_{i}[irs]=&-\frac{1}{\dim{\hg}}([srs]+[rrs])
  = \left\{\begin{aligned}
  &\frac{-1}{s+s^2} \quad \text{ if } \ s>r, \\
  &\frac{-1}{r+r^2} \quad \text{ if } \ s<r, \\
  \end{aligned} \right. \\
  \la\lic_\pg I_s, I_s\ra =  \frac{1}{\dim{\hg}}\sum_{k\neq s;i}[iks]=&\frac{1}{\dim{\hg}}\sum_{i=1}^{s-1}[iis]+\frac{1}{\dim{\hg}}\sum_{k=s+1}^{m-1}[sks] \\
  =&\frac{1}{\dim{\hg}}\sum_{i=1}^{s-1}\frac{\dim \hg}{s+s^2}+\frac{1}{\dim{\hg}}\left(\sum_{k=s+1}^{m-n-1}\left[kss\right]+ \sum_{k=m-n}^{m-1}\left[kss\right]\right)\\
  =&\frac{1}{\dim{\hg}}\sum_{i=1}^{s-1}\frac{\dim \hg}{s+s^2}+\frac{1}{\dim{\hg}}\left(\sum_{k=s+1}^{m-n-1}\frac{\dim \hg}{k+k^2}+ \sum_{j=0}^{n-1}\left[ss(m-n+j)\right]\right)\\
  = &\frac{s-1-s^2}{s+s^2}+\underbrace{\frac{m-n-1}{m-n}+\sum_{j=0}^{n-1}\frac{1}{\Delta_j}}_{-2(\rho-\tfrac{3}{4})}\\
   =& \frac{s-1-s^2}{s+s^2}-2\rho+\frac{3}{2}.
  \end{aligned}
\end{equation*}
It is therefore easy to check that with respect to the basis $\mathcal{C}$, $A:=\tfrac{1}{\sqrt{6}}(1,1,-2,0,\ldots,0)$ is a unit direction such that,
 \begin{equation*}
 \lambda_\pg\leq \left< \lic_\pg A,A\right> = 1-2\rho < 2\rho,
\end{equation*}
which implies that $\gk$ is $G$-unstable.  The inequality $\unc<\rho$ is well known, see e.g.\ \cite[Section 3.1]{stab}.
\end{proof}

\section{Ledger-Obata spaces}\label{LO-sec}

In this section, given a connected compact simple Lie group $F$, we consider the so-called {\it Ledger-Obata} spaces
$$
M=G/K=F^{m+1}/\Delta^{m+1}F, \qquad 2\leq m,
$$
where $F^{m+1}:=F\times\dots \times F$ ($(m+1)$-times) and $\Delta^{m+1}F:=\{(x,\ldots,x):x\in F\}$.  The Lie group $G=F^{m+1}$ acts transitively on $\overline{G}:=F^m$ by
$$
(x_1,\dots, x_{m+1})\cdot(y_1,\dots, y_m)=(x_1y_1x_{m+1}^{-1},\ldots, x_my_mx_{m+1}^{-1}),
$$
with isotropy group at the identity given by $K=\Delta^{m+1}F$.  This is therefore a particular case of the family of spaces studied in \S\ref{nik-sec} (i.e., $n=0$ and $m+1$ instead of $m$).  The standard metric $\gk$ is always Einstein, say   $\ricci(g_B)=\rho g_B$, on any Ledger-Obata space (see \cite{ChnNknNkn}).  We aim to obtain a lower bound for the coindex of this $G$-unstable critical point.  

If $\fg $ denotes the Lie algebra of $F$, then the $\kil_\ggo$-orthogonal reductive decomposition $\ggo=\kg\oplus\pg$ is given by $\ggo=\fg^{m+1}$, $\kg=\Delta^{m+1}\fg$ and
$$
\pg:=\left\{(X_1,\ldots,X_{m+1}) : X_i\in \fg \text{ and } \sum X_i=0\right\}.
$$
Thus any Ad($K$)- irreducible subspace of $\pg$ has the form
$$
\ggo_\alpha := \left\{(a_1X,\ldots, a_{m+1}X):X\in \fg\right\},
$$
for some fixed $\alpha:=(a_1,\ldots, a_{m+1})$ such that $\sum a_i =0$. Note that $B_\ggo(\ggo_\alpha,\ggo_\beta)=0$ if and only if $\alpha \ \bot \ \beta$ and
$[\ggo_\alpha,\ggo_\beta]=\ggo_{\alpha\cdot\beta}$, where $\alpha\cdot\beta=(a_1b_1,\ldots ,a_{m+1}b_{m+1})$.  As before, we consider an orthogonal decomposition $\pg=\ggo_{\alpha_1}\oplus \ggo_{\alpha_2} \oplus \cdots \oplus \ggo_{\alpha_m}$ in Ad($K$)-invariant subspaces, where
$$
\alpha_i=(\underbrace{1,\ldots,1}_i,-i,\underbrace{0,\ldots,0}_{m-i})\in\RR^{m+1}.
$$
Note that $\dim{\ggo_{\alpha_i}}=\dim{\fg}$ for all $i=1,\dots, m$.

We consider the orthonormal subset $\mathcal{C}=\{I_1,\ldots I_{m}\}\subset\sym(\pg)^K$ as in the proof of Theorem \ref{N-stab} and denote by $\widetilde{\Le_\pg}$ the linear operator of $\la\cca\ra$ obtained by restricting and projecting $\Le_\pg$ on $\la\cca\ra$, where $\la\cca\ra$ is the subspace generated by $\cca$.

\begin{lemma}\label{LO-str}
\hspace{1cm}
\begin{enumerate}[{\rm (i)}]
\item \cite{ChnNknNkn} For all $1\leq i<k\leq m$,
$$
[iii]=\frac{(1-i)^2\dim{\fg}}{i(1+i)}, \qquad [iik]=\frac{\dim{\fg}}{k+k^2}.
$$
\item \cite{ChnNknNkn} $\rho=\frac{m+3}{4(m+1)}$.

\item The symmetric $m\times m$ matrix $[\widetilde{\Le_\pg}]_\cca$ is given by,
$$
  \left[\widetilde{\Le_\pg}\right]_{rs} = \left\{\begin{aligned}
  &\frac{-1}{r+r^2} \quad \text{ if } \ r>s, \\
  &\frac{-1}{s+s^2} \quad \text{ if } \ r<s, \\
  \end{aligned} \right. \qquad \qquad
  \left[\widetilde{\Le_\pg}\right]_{ss}  =\frac{s-1-s^2}{s+s^2}+\frac{m}{m+1}.
$$
\end{enumerate}
\end{lemma}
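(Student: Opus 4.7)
The plan is to derive part (iii) from the general formula \eqref{Lpijk} by specializing the calculation in the proof of Theorem \ref{N-stab} to the Ledger-Obata situation, which corresponds to $n=0$ with $m+1$ simple factors in place of the Nikonorov $m$. Parts (i) and (ii) are cited from \cite{ChnNknNkn}; they also follow from Lemma \ref{str}(i)--(ii) after these substitutions, together with the analogous computation $\la \alpha_i \cdot \alpha_i, \alpha_i\ra = i(1-i^2)$ to obtain the formula for $[iii]$.

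For part (iii), apply \eqref{Lpijk} with $d_k = \dim \fg$ for every $k$ to the orthonormal set $\mathcal{C} = \{I_1,\dots, I_m\}\subset\sym(\pg)^K$. The structural input that drives everything is the Hadamard product rule $\alpha_r \cdot \alpha_s = \alpha_{\min(r,s)}$ for $r \neq s$, which is immediate from the explicit description of the $\alpha_i$'s and implies $[\ggo_{\alpha_r}, \ggo_{\alpha_s}] = \ggo_{\alpha_{\min(r,s)}}$. Because the $\alpha_i$'s are pairwise orthogonal in $\RR^{m+1}$, it follows that $[irs] \neq 0$ only when $i = \min(r,s)$; in particular, every structural constant with three pairwise distinct indices vanishes.

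The off-diagonal case then reduces to a single surviving term. For $r<s$, one gets $\sum_i [irs] = [rrs] = \dim\fg/(s+s^2)$ by part (i), whence $\la \Le_\pg I_r, I_s \ra = -1/(s+s^2)$; the case $r>s$ is symmetric. For the diagonal, split the sum $\sum_{k\neq s;\, i}[iks]$ into $k<s$ and $k>s$: the first range contributes only $i=k$, giving $(s-1)\dim\fg/(s+s^2)$, and the second range contributes only $i=s$, giving $\sum_{k=s+1}^m \dim\fg/(k+k^2)$. After dividing by $\dim\fg$ and applying the telescoping identity $\sum_{k=s+1}^m 1/(k(k+1)) = 1/(s+1) - 1/(m+1)$, one arrives at
\[
\la \Le_\pg I_s, I_s \ra \;=\; \frac{s-1}{s(s+1)} + \frac{1}{s+1} - \frac{1}{m+1},
\]
which a short algebraic rearrangement rewrites as $\frac{s-1-s^2}{s+s^2} + \frac{m}{m+1}$, exactly matching the claim.

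The only bookkeeping subtlety is the vanishing of $[iks]$ whenever $i,k,s$ are mutually distinct, and this rests entirely on the rigidity of $\alpha_i \cdot \alpha_j$ being proportional to a single $\alpha_\ell$. Once that observation is in place, everything else is a direct expansion of \eqref{Lpijk} combined with an elementary telescoping of partial fractions, and this is where one should expect to spend the most care in the write-up rather than in any genuinely difficult analytic step.
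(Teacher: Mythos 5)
Your proposal is correct and follows essentially the same route as the paper: part (i) via the inner products $\la\alpha_i\cdot\alpha_i,\alpha_k\ra$, and part (iii) by specializing the computation in the proof of Theorem \ref{N-stab} (formula \eqref{Lpijk}, the splitting of the diagonal sum into $k<s$ and $k>s$, and the telescoping $\sum_{j=1}^{i}\tfrac{1}{j+j^2}=\tfrac{i}{i+1}$) to $n=0$ with $m$ irreducible factors. The only difference is cosmetic: you make explicit the vanishing of $[iks]$ for pairwise distinct indices via $\alpha_r\cdot\alpha_s=\alpha_{\min(r,s)}$, which the paper leaves implicit when it writes $\sum_i[irs]=[srs]+[rrs]$.
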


\begin{proof}
Let $\{ X_1,\ldots,X_{\dim{\fg}}\}$ be a $-\kil_\ggo$-orthonormal basis of $\fg$, as before we can compute the structural constants using that the set $\left\{\tfrac{1}{|\alpha_i|}\alpha_i(X_l)\right\}$ is a $-\Bb_\ggo$-orthonormal basis of $\ggo_{\alpha_i}$ and formula (\ref{SC}):
\begin{equation*}
  \left[iik\right]=  \sum_{lmr}\left<\left[\tfrac{\alpha_i(X_r)}{|\alpha_i|},\tfrac{\alpha_i(X_l)}{|\alpha_i|}\right],\tfrac{\alpha_k(X_m)}{|\alpha_k|}\right>^2 =  \left(\tfrac{\left<\alpha_i\cdot\alpha_i,\alpha_k\right>}{|\alpha_i|^2|\alpha_k|}\right)^2 \sum_{lmr}\left<[X_r,X_l],X_m\right>^2.
\end{equation*}
Since,
\begin{equation*}
\left<\alpha_i\cdot\alpha_i,\alpha_k\right>=\left\{
\begin{aligned}
                        i+i^2& \ \quad \text{ if } \ i<k  \\
                        i-i^3&  \ \quad \text{ if } \ i=k\\
  \end{aligned}  \right.,
\end{equation*}
the formulas stated in part (i) follow.

Parts (ii) and (iii) follow by setting $n=0$ in Lemma \ref{str}, (ii) (setting $\sum\limits_{j=0}^{n-1}\frac{1}{\Delta_j}=0$) and the proof of Theorem \ref{N-stab}, respectively.  Note that in this case we have $m$ irreducible factors of $\pg$.
\end{proof}


\begin{lemma}\label{spec}
The spectrum of $\widetilde{\Le_\pg}$ is given by $0$ and 
$$
a_i:=\frac{m}{m+1}-\frac{i}{i+2}, \qquad i=1,\dots,m-1.
$$
\end{lemma}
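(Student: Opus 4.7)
The plan is to exhibit an explicit orthogonal eigenbasis of the $m\times m$ symmetric matrix $\widetilde{\Le_\pg}$ described in Lemma \ref{LO-str}(iii) and to read off its spectrum by direct computation. The zero eigenvalue comes for free via two complementary routes: conceptually, $\lic_\pg I_\pg=0$ because $[\ad_\pg X,I_\pg]=0$ for every $X\in\pg$, and since all $\pg_k$ share the same dimension $\dim\fg$, the coordinates of $I_\pg$ in the basis $\{I_1,\dots,I_m\}$ are all equal, producing the kernel direction $\mathbf{1}:=(1,\dots,1)\in\RR^m$. Equivalently, a one-line telescoping $\sum_{r=s+1}^{m}\tfrac{1}{r(r+1)}=\tfrac{1}{s+1}-\tfrac{1}{m+1}$ verifies that the rows of the matrix sum to zero.

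For the nonzero part of the spectrum I would imitate the root-like decomposition $\pg=\oplus_i\ggo_{\alpha_i}$ used in Section \ref{nik-sec} and propose, for $i=1,\dots,m-1$,
$$v_i := (\underbrace{1,\dots,1}_{i},\,-i,\,0,\dots,0)\in\RR^m,$$
as eigenvectors with eigenvalue $a_i$. To verify $\widetilde{\Le_\pg}\,v_i=a_i\,v_i$, I would compute $(\widetilde{\Le_\pg}\,v_i)_r$ case by case on the row index $r$. When $r>i+1$ every relevant $M_{r,s}$ equals $-1/(r(r+1))$, so $\sum_{s=1}^{i}M_{r,s}+(-i)M_{r,i+1}=0$ immediately. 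When $r=i+1$, combining the diagonal entry from Lemma \ref{LO-str}(iii) with the $i$ equal off-diagonal entries $-1/((i+1)(i+2))$ reduces, after elementary algebra, to $-i\,a_i=a_i\,(v_i)_{i+1}$. When $r\le i$, one telescopes $\sum_{s=r+1}^{i}\tfrac{1}{s(s+1)}=\tfrac{1}{r+1}-\tfrac{1}{i+1}$ and uses the identity $\tfrac{1}{i+1}+\tfrac{i}{(i+1)(i+2)}=\tfrac{2}{i+2}$ to collapse everything to $-\tfrac{1}{m+1}+\tfrac{2}{i+2}=\tfrac{2m-i}{(m+1)(i+2)}=a_i$.

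To close, the set $\{\mathbf{1},v_1,\dots,v_{m-1}\}$ is orthogonal in $\RR^m$ (for $i<j$, $v_i\cdot v_j=i+(-i)\cdot 1=0$ and $v_i\cdot\mathbf{1}=i-i=0$) of cardinality $m$, hence a full eigenbasis, so the listed values $\{0,a_1,\dots,a_{m-1}\}$ exhaust the spectrum of $\widetilde{\Le_\pg}$. The main obstacle is the case $r\le i$: one has to juggle simultaneously the diagonal formula, two different off-diagonal formulas (depending on whether $s<r$ or $r<s\le i$), and the cross-term $-iM_{r,i+1}$, and the cancellation into the clean closed form $a_i$ only becomes transparent after the two telescoping identities above are invoked in the right order.
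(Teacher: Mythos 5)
Your proof is correct and follows essentially the same route as the paper: the paper also exhibits the vectors $(1,\dots,1,-i,0,\dots,0)$ as eigenvectors with eigenvalue $a_i$, identifies the kernel as the span of $(1,\dots,1)$, and invokes the same telescoping identity $\sum_{j=1}^{i}\tfrac{1}{j+j^2}=\tfrac{i}{i+1}$; you merely carry out in full the case-by-case verification that the paper leaves as ``easy to check,'' and add the (correct) orthogonality count to confirm the spectrum is exhausted.
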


\begin{proof}
If we set $\eta_s:=\left[\widetilde{\Le_\pg}\right]_{ss}$, then by Lemma \ref{LO-str}, (iii), we have that the $m\times m$ matrix of $\widetilde{\lic_\pg}$ is given by
$$
\left[\widetilde{\Le_\pg}\right]_\mathcal{C}=\left[\begin{smallmatrix}
           \eta_1  & -\tfrac{1}{6} &-\tfrac{1}{12} & \cdots & &-\tfrac{1}{m+m^2}     \\
           -\tfrac{1}{6}  & \eta_2  &-\tfrac{1}{12} & \cdots & & -\tfrac{1}{m+m^2}   \\
             -\tfrac{1}{12}& -\tfrac{1}{12}   & \eta_3  & \cdots & &   \\
            &   &  & \ddots & &   \\
             &  & & &\eta_{m-1} &  -\tfrac{1}{m+m^2}\\
           -\tfrac{1}{m+m^2} & -\tfrac{1}{m+m^2} & -\tfrac{1}{m+m^2}   & \cdots  & -\tfrac{1}{m+m^2} &\eta_m
          \end{smallmatrix}\right].
$$
It is now easy to check that each vector
$$
(\underbrace{1,\ldots,1}_i,-i,\underbrace{0,\ldots,0}_{m-1-i})\in\RR^{m}, \qquad i=1,\dots,m-1,
$$
is an eigenvector of $\widetilde{\Le_\pg}$ with eigenvalue $a_i$ and its kernel is generated by $(1,\dots,1)$, concluding the proof (we are using here that $\sum_{j=1}^{i}\frac{1}{j+j^2}=\frac{i}{i+1}$). 
\end{proof}



\begin{theorem}\label{LO}
The standard metric $\gk$ on the Ledger-Obata space $M=F^{m+1}/\Delta^{m+1}F$ is $G$-unstable and has coindex $\geq m-2$ if $11\leq m$ and coindex $\geq m-1$ if $2\leq m\leq 10$.
\end{theorem}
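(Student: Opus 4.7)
The plan is to leverage the spectral computation in Lemma \ref{spec} together with the second variation formula \eqref{ScLL}: a direction $A\in\sym_0(\pg)^K$ yields $\scalar''_{\gk}(T,T)>0$ exactly when $\la\lic_\pg A,A\ra<2\rho\|A\|^2$.  Since $2\rho=\tfrac{m+3}{2(m+1)}$ by Lemma \ref{LO-str}(ii), the coindex at $\gk$ is bounded below by the number of eigenvalues of $\lic_\pg$ on $\sym_0(\pg)^K$ that lie strictly below $\tfrac{m+3}{2(m+1)}$; it is enough to count them inside $\spann(\mathcal{C})\cap\sym_0(\pg)^K$.

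First I would identify this restriction explicitly.  Because $I_\pg=\sqrt{\dim\fg}\,(I_1+\cdots+I_m)$, the trace-zero condition on $\spann(\mathcal{C})$ is exactly orthogonality to $(1,\ldots,1)\in\RR^m$, which Lemma \ref{spec} singles out as the kernel of $\widetilde{\lic_\pg}$.  Consequently the spectrum of $\lic_\pg$ on $\spann(\mathcal{C})\cap\sym_0(\pg)^K$ consists of the $m-1$ pairwise distinct values $a_i=\tfrac{m}{m+1}-\tfrac{i}{i+2}$, $i=1,\ldots,m-1$, with pairwise orthogonal eigenvectors (automatic from distinct eigenvalues of a symmetric matrix) that already lie in the trace-free part.

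The core step is then the scalar comparison $a_i<2\rho$.  A one-line manipulation rewrites it as $\tfrac{i}{i+2}>\tfrac{m-3}{2(m+1)}$, and for $m\geq 4$ as $i>\tfrac{2(m-3)}{m+5}$.  The threshold $\tfrac{2(m-3)}{m+5}$ equals $1$ exactly at $m=11$ and is strictly $<1$ for $2\leq m\leq 10$.  Hence for $2\leq m\leq 10$ every $i\in\{1,\ldots,m-1\}$ satisfies the inequality, giving coindex at least $m-1$; for $m\geq 11$ the index $i=1$ fails (equality at $m=11$, strict failure for $m\geq 12$) while $i=2,\ldots,m-1$ still succeed, giving coindex at least $m-2$.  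Finally, $G$-instability follows from $a_{m-1}=\tfrac{1}{m+1}<2\rho$, valid for every $m\geq 2$.

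I do not expect a serious obstacle: the spectral content has already been worked out in Lemmas \ref{LO-str} and \ref{spec}, and the remaining work is the numerical inequality together with the observation that the eigenvectors $v_i=(\underbrace{1,\ldots,1}_{i},-i,0,\ldots,0)$ exhibited in the proof of Lemma \ref{spec} are automatically mutually orthogonal and trace-free.  The only point requiring any care is the borderline case $m=11$, where $a_1=2\rho$ exactly; this explains why the integer threshold between the two branches of the statement falls between $m=10$ and $m=11$, and is settled by a direct one-line check.
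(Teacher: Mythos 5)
Your proposal is correct and follows essentially the same route as the paper: both compare the eigenvalues $a_i=\tfrac{m}{m+1}-\tfrac{i}{i+2}$ of $\widetilde{\Le_\pg}$ from Lemma \ref{spec} with $2\rho=\tfrac{m+3}{2(m+1)}$, reduce to the threshold inequality $i>\tfrac{2(m-3)}{m+5}$, and observe that the threshold crosses $1$ exactly at $m=11$ (the paper's proof is just a more condensed version of this count). Your extra remarks on orthogonality of the eigenvectors and on the trace-free hyperplane being the kernel direction $(1,\dots,1)$ are correct and consistent with the framework set up in \S\ref{pre}.
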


\begin{proof}
The smallest non-zero eigenvalue of $\widetilde{\Le_\pg}$ is $a_{m-1}=\frac{1}{m+1}$.  Thus 
$$
\lambda_\pg\leq a_{m-1}=\tfrac{1}{m+1}<\tfrac{m+3}{2(m+1)}=2\rho,
$$ 
which implies that the standard metric on every Ledger-Obata space is G-unstable.  On the other hand, since $a_i<2\rho$ if and only if $\frac{2m-6}{m+5}<i$, we obtain the lower bounds for the coindex as stated, concluding the proof.
\end{proof}

\section{$G$-stable Einstein metrics with $G$ non-simple}\label{Gstab-sec}

In the light of the $G$-instability results obtained in \S\ref{nik-sec} and \S\ref{LO-sec}, it is natural to ask whether any $G$-invariant Einstein metric on a homogeneous space $G/K$ such that $G$ is not simple is $G$-unstable.  The aim of this section is to show that this is not true.  

Given a simple Lie group $H$ and a simple proper subgroup $K\subset H$, we consider the semisimple Lie group $G:=H\times K$, which acts transitively on $H$ by
$$
(\bar{h},k)\cdot h = \bar{h}hk^{-1},
$$
with isotropy subgroup at the identity given by $\Delta K$, the diagonal subgroup of $K$. This provides a presentation $M=G/\Delta K$ of the Lie group $M=H$ as a homogeneous space.

If $\hg=\kg\oplus\ag$ is the $\Bb_\hg$-orthogonal decomposition, then the $\Bb_\ggo$-orthogonal reductive decomposition of $M=G/\Delta K$ is given by,
$$
\ggo=\hg\oplus\kg=\Delta\kg\oplus\pg, \qquad \pg:=\ag\oplus \widetilde{\pg},
$$
where 
$$
\widetilde{\pg}:=\{X\in \ggo : X_\kg=-\tfrac{1}{c}X_\hg\} = \{(Z,-\tfrac{1}{c}Z):Z\in\kg\},
$$ 
(note that $\dim{\widetilde{\pg}}=\dim{\kg}$), $X_\hg$ and $X_\kg$ denote the projections of $X$ relative to $\ggo=\hg\oplus\kg$, respectively, and $\Bb_\kg=c\Bb_\hg|_{\kg\times\kg}$ (note that $0<c<1$).

It is easy to see that the differential at the origin of the diffeomorphism $\psi:G/\Delta K\rightarrow H$ determined by the above action is given by the $\Ad(K)$-equivariant map
$$
d \psi |_{o}: \pg \rightarrow \hg, \qquad d \psi |_{o} (X_\hg+X_\kg)=X_\hg-X_\kg, \qquad\forall X\in \pg,
$$
and its inverse $\varphi:=(d \psi |_{o})^{-1}: \hg \rightarrow \pg $ by
$$
 \varphi(Z)=\left\{\begin{aligned}
               & \qquad Z & \text{ if } Z\in \ag, \\
               & \left(\tfrac{c}{c+1}Z, (-\tfrac{1}{c}) \tfrac{c}{c+1}Z\right) & \text{if } Z\in \kg.
              \end{aligned}\right.
$$
Using that $\varphi$ is $\Ad(K)$-equivariant, one obtains the following diffeomorphism:
$$
\widetilde{\varphi}:\mathcal{M}^{H,K} \rightarrow \mathcal{M}^G, \qquad \widetilde{\varphi}(\bar{g}):= \bar{g}(\varphi^{-1} \cdot,\varphi^{-1} \cdot),
$$
where $\mathcal{M}^{H,K}$ is the manifold of all left-invariant metrics on $H$ which are in addition Ad($K$)-invariant and $\mathcal{M}^G$ is the manifold of all $G$-invariant metrics on $G/\Delta K$.  Note that $\widetilde{\varphi}$ is actually an isomorphism between the vectors spaces $\sym^2(\hg)^K$ and $\sym^2(\pg)^K$ of $\Ad(K)$-invariant symmetric $2$-tensors, respectively.

It is well known that the Killing metric $\bar{g}_B:=-\Bb_{\hg}$ on the simple Lie group $H$ is Einstein with $\unc$ as Einstein constant, thus the above diffeomorphism maps $\bar{g}_B$ to a $G$-invariant metric $g_0$ on $G/\Delta K$ which is also Einstein with $\ricci(g_0)=\unc g_0$.

\begin{proposition}\label{prod1}
For any simple Lie group $H\neq \SU(n), \Spe(n)$ and simple subgroup $K\subset H$, there exists an $(H\times K)$-invariant Einstein metric $g_0$ on the homogeneous space $M=H\times K/\Delta K$ which is $(H\times K)$-stable.
\end{proposition}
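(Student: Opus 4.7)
The plan is to transport the stability question to the Lie group $(H,\bar g_B)$ via the isometry $\psi$ and exploit bi-invariance of $\bar g_B$ to reduce the problem to a representation-theoretic Casimir inequality.

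Since $\psi:(M,g_0)\to(H,\bar g_B)$ is a $K$-equivariant Riemannian isometry and $\widetilde\varphi:\mathcal{M}^{H,K}\to\mathcal{M}^G$ is a diffeomorphism carrying $\bar g_B$ to $g_0$, the Hessian $\scalar''_{g_0}|_{\tca\tca_{g_0}^G}$ is identified via $d\widetilde\varphi$ with $\scalar''_{\bar g_B}|_{\sym_0(\hg)^K}$. Hence the $(H\times K)$-stability of $g_0$ is equivalent to strict negativity of the Hessian of scalar curvature at $\bar g_B$ on traceless $\Ad(K)$-invariant left-invariant symmetric $2$-tensors on $H$. Applying formula \eqref{Lpnr-intro} with $\pg=\hg$, for $A\in\sym_0(\hg)^K$ and a $(-B_\hg)$-orthonormal basis $\{Y_i\}$ of $\hg$, an expansion using $\sum_i(\ad Y_i)^2=-I$ (Schur, via simplicity and the Killing-form normalization) gives
\begin{equation*}
\lic_\hg A=-\tfrac{1}{2}\sum_i[\ad Y_i,[\ad Y_i,A]]=A+\sum_i \ad Y_i\,A\,\ad Y_i.
\end{equation*}
On each $\Ad(H)$-irreducible summand $V_i\subset\sym^2(\hg^*)$, $\lic_\hg$ acts as the scalar $\tfrac{1}{2}\widetilde c_i$, where $\widetilde c_i$ is the Casimir eigenvalue of $V_i$ normalized so that the adjoint representation of $\hg$ has $\widetilde c=1$.

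Decomposing $\sym^2(\hg^*)=\RR\,\bar g_B\oplus V_1\oplus\cdots\oplus V_s$ into $\Ad(H)$-irreducibles, we have $\sym_0(\hg)^K=\bigoplus_i V_i^K$; combined with $2\rho=1/2$, the stability condition $\lic_\hg>2\rho\,I$ is then equivalent to the uniform inequality $\widetilde c_i>1$ for every $i$ with $V_i^K\neq 0$. I then invoke the classical $\Ad(H)$-decomposition of $\sym^2(\hg^*)$ and the associated Casimir eigenvalues: among all compact simple Lie algebras, the only ones possessing a non-trivial summand of $\sym^2(\hg^*)$ with $\widetilde c\leq 1$ are $\sug(n)$ and $\spg(n)$, in which such a summand achieves $\widetilde c=1$ precisely, corresponding to the classical infinitesimal Einstein deformations of $\bar g_B$. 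Excluding $H=\SU(n),\Spe(n)$ thus yields $\widetilde c_i>1$ uniformly on every non-trivial $V_i$ (independently of $K$), hence $\lic_\hg>\tfrac{1}{2}\,I$ on $\sym_0(\hg)^K$ for every simple subgroup $K\subset H$, giving the strict $(H\times K)$-stability of $g_0$.

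The principal obstacle is the identification $\lic_\hg|_{V_i}=\tfrac{1}{2}\widetilde c_i$ in the second step: one must carefully match the conjugation Casimir on $\End(\hg)$ restricted to $\sym^2(\hg^*)$ with the scalar action on each irreducible piece and keep every normalization consistent. Once this is secured, the remaining case-by-case verification of the Casimir inequalities follows from the standard Dynkin-diagram tables.
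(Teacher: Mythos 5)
Your argument is correct and is essentially the paper's: both transport the Hessian through the linear isomorphism $\widetilde{\varphi}$ (so that the signatures of $\overline{\scalar}''_{\bar g_B}$ and $\scalar''_{g_0}$ coincide) and then use the $H$-stability of $\bar g_B$ for $H\neq\SU(n),\Spe(n)$, which the paper simply cites from \cite[Proposition 5.1]{stab-tres} while you re-derive it by identifying $\lic_\hg$ with $\tfrac12\operatorname{Cas}$ on $\sym^2_0(\hg)$ and appealing to the Casimir spectrum. One harmless inaccuracy: for $\spg(n)$ the offending summand is $\Lambda^2_0\CC^{2n}$, whose normalized Casimir eigenvalue is $\tfrac{n}{n+1}<1$ (so $\bar g_B$ is actually $H$-unstable there, not merely neutral as in the $\sug(n)$ case, where the adjoint summand gives eigenvalue exactly $1$); since $\Spe(n)$ is excluded from the statement, this does not affect your conclusion.
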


\begin{proof}
Since the corresponding scalar curvature functionals $\overline{\scalar}:\mathcal{M}^{H,K}\rightarrow \RR$ and $\scalar:\mathcal{M}^G\rightarrow \RR$ satisfy that $\overline{\scalar}=\scalar\circ\widetilde{\varphi}$, 
for all $T\in\sym^2(\hg)^K$,
$$
\overline{\scalar}''_g(T,T)=\left.\frac{d^2}{dt^2}\right|_0\overline{\scalar}(g+tT)=\left.\frac{d^2}{dt^2}\right|_0\scalar(\widetilde{\varphi}(g+tT))=\scalar''_{\widetilde{\varphi}(g)}(\widetilde{\varphi}(T),\widetilde{\varphi}(T)).
$$
In particular, $\overline{\scalar}''_g$ and $\scalar''_{\widetilde{\varphi}(g)}$ have the same signature as bilinear forms, but it is well known that $\bar{g}_B$ is $H$-stable if $H\neq \SU(n), \Spe(n)$ (see e.g.\ \cite[Proposition 5.1]{stab-tres}), so $g_0=\widetilde{\varphi}(\bar{g}_B)$ is $(H\times K)$-stable on $M=H\times K/\Delta K$.
\end{proof}

In what follows, we study the existence of $(H\times K)$-stable Einstein metrics on $M=H\times K/\Delta K$, including the case when $H$ is $\SU(n)$ or $\Spe(n)$.

We fix the standard metric $\gk:=-\Bb_{\ggo}|_{\pg} \in \mathcal{M}^G$ as a background metric.  Using \cite[Section 7]{stab-tres}, we obtain that the structural constants of the decomposition $\pg=\pg_1\oplus\pg_2$, where $\pg_1:=\ag$ and $\pg_2:=\tilde{\pg}$, are given by
$$
[111]=d_1-2(1-c)d_2 \qquad \qquad [112]=\frac{c(1-c)}{1+c}d_2 \qquad \qquad [222]=\frac{(c-1)^2}{c+1}d_2,
$$
where $d_1:=\dim{\ag}$ and $d_2:=\dim{\kg}$.  Since $\Bb_{\ggo}=\Bb_{\hg}+\Bb_{\kg}$, it is easy to check that
$$
g_0=x_1 \gk|_{\pg_1}+x_2\gk|_{\pg_2}, \qquad \mbox{where}\quad x_1=1, \quad x_2=\frac{c+1}{c}.
$$
One can verify that indeed $\ricci(g_0)=\unc g_0$ by using e.g.\ \cite[Section 2.5]{stab-dos}.

\begin{theorem}\label{prod2}
For any simple Lie group $H$ and simple subgroup $K\subset H$ such that the homogeneous space $H/K$ is isotropy irreducible, 
there exists an $(H\times K)$-stable Einstein metric $g_0$ on $M=H\times K/\Delta K$.
\end{theorem}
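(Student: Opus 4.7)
My overall approach is to reduce stability of $g_0$ to stability of the bi-invariant Killing metric $\bar g_B=-\Bb_\hg$ on $H$ via the scalar-curvature-preserving diffeomorphism $\wt\vp:\mca^{H,K}\to\mca^G$ introduced just before Proposition \ref{prod1}. Since $\wt\vp(\bar g_B)=g_0$ and $\wt\vp$ conjugates the respective second variations of scalar curvature (cf.\ the proof of Proposition \ref{prod1}), the $(H\times K)$-stability of $g_0$ on $M=H\times K/\Delta K$ is equivalent to the stability of $\bar g_B$ as a critical point of $\overline{\scalar}$ restricted to the unit-volume slice of $\mca^{H,K}$. The cases $H\neq\SU(n),\Spe(n)$ are already covered by Proposition \ref{prod1}, so the remaining content concerns $H\in\{\SU(n),\Spe(n)\}$: in these cases $\bar g_B$ fails to be $H$-stable, and the whole point is that restricting the perturbations to $\Ad(K)$-invariant ones kills the bad directions.

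Next I would describe the TT-space at $\bar g_B$ inside $\mca^{H,K}$. By the isotropy irreducibility of $H/K$, the reductive decomposition $\hg=\kg\oplus\ag$ has both summands $\Ad(K)$-irreducible. A case-by-case inspection of the classification of isotropy irreducible pairs with simple $H$ and simple $K$, combined with the standing exclusion $(H,K)\neq(\Spe(n),\Spe(n-1))$, gives that $\ag$ and $\kg$ are inequivalent as $K$-representations. By Schur's lemma, $\sym(\hg)^K=\RR I_\ag\oplus\RR I_\kg$ is then two-dimensional, and its $\bar g_B$-traceless subspace is one-dimensional, generated by $T_0:=d_\kg I_\ag-d_\ag I_\kg$, where $d_\ag=\dim\ag$ and $d_\kg=\dim\kg$. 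Since diagonal left-invariant symmetric $2$-tensors on a compact semisimple Lie group are automatically divergence-free with respect to the bi-invariant Killing metric, $T_0$ is a genuine TT-tensor, and transport via $\wt\vp$ identifies it with a generator of $\tca\tca_{g_0}^G$.

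The final task is to check that $\overline{\scalar}''_{\bar g_B}(T_0,T_0)<0$. I would work on the $G$-side using the two-block parameterization $g_t=x_1(t)\gk|_{\pg_1}+x_2(t)\gk|_{\pg_2}$ through $g_0=(1,\tfrac{c+1}{c})\gk$, with initial velocity proportional to $\wt\vp(T_0)$ and the unit-volume constraint $x_1^{d_1}x_2^{d_2}=\mathrm{const}$. The standard closed-form formula for the scalar curvature of a diagonal homogeneous metric, applied with the structural constants
$$
[111]=d_1-2(1-c)d_2,\qquad [112]=\tfrac{c(1-c)}{1+c}d_2,\qquad [222]=\tfrac{(c-1)^2}{c+1}d_2,
$$
already recorded in Section 5, together with the Einstein normalization $\ricci(g_0)=\tfrac{1}{4}g_0$, reduces $\scalar''_{g_0}(\wt\vp(T_0),\wt\vp(T_0))$ to a rational expression in $c\in(0,1)$ and the dimensions $d_1=\dim\ag$, $d_2=\dim\kg$. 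The main obstacle is showing this expression is strictly negative for every admissible triple $(c,d_1,d_2)$; the exclusion of $(\Spe(n),\Spe(n-1))$ is precisely what guarantees that the resulting polynomial inequality holds in all remaining cases, and also what guarantees that the multiplicity-free reduction of the previous paragraph is available.
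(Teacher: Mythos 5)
Your overall strategy is essentially the paper's: identify the two-block $\Ad(K)$-invariant decomposition $\pg=\pg_1\oplus\pg_2$ (equivalently $\hg=\ag\oplus\kg$), observe that the two summands are irreducible and inequivalent so that the traceless invariant directions form a single line, and test the second variation of $\scalar$ at $g_0$ on that line using the structural constants $[111],[112],[222]$. (The paper does this on the $G$-side by computing the matrix of $\Le_\pg$ at $g_0$ via \cite[Theorem 3.1]{stab-dos}, obtaining the single nonzero eigenvalue $\lambda_\pg=\tfrac{(d_1+d_2)(1-c)}{d_1}$, rather than transporting back to $H$ via $\wt\vp$, but that is only a change of bookkeeping.)

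The genuine gap is in your last step. You reduce the theorem to the statement that a certain rational expression in $(c,d_1,d_2)$ is negative and then assert, without argument, that excluding $(\Spe(n),\Spe(n-1))$ makes this hold. But that inequality is the entire content of the theorem: after the reduction, stability is equivalent to $\lambda_\pg=\tfrac{(d_1+d_2)(1-c)}{d_1}>2\rho=\tfrac12$, i.e.\ to the bound $c<\tfrac{d_1+2d_2}{2(d_1+d_2)}$ on the Casimir constant of the embedding $\kg\subset\hg$, and this is not a formal consequence of isotropy irreducibility --- indeed it fails for $(\Spe(n),\Spe(n-1))$, where $c=\tfrac{n}{n+1}$ and $\lambda_\pg=\tfrac{n(2n+1)}{(4n-1)(n+1)}<\tfrac12$. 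The missing input is the quantitative estimate on $c$ for isotropy irreducible pairs proved by D'Atri--Ziller (\cite[Theorem 11]{DtrZll}), which is exactly what the paper invokes to close the argument; without citing or reproving such a bound your proof does not go through. A secondary inaccuracy: you attribute the exclusion of $(\Spe(n),\Spe(n-1))$ partly to the availability of the multiplicity-free (Schur) reduction, but that reduction is valid for $(\Spe(n),\Spe(n-1))$ as well --- the exclusion is needed only for the eigenvalue inequality, as the remark following the theorem in the paper makes explicit.
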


\begin{proof}
We first note that the $\Ad(K)$-representations $\pg_1$ and $\pg_2$ are irreducible and inequivalent (see \cite{DtrZll}).  It follows from \cite[Theorem 3.1]{stab-dos} that, for $g_0=\gk|_{\pg_1}+\frac{c+1}{c}\gk|_{\pg_2}$, the matrix of the Lichnerowicz Laplacian with respect to the orthonormal basis $\{\tfrac{1}{\sqrt{d_1}}I_1,\tfrac{1}{\sqrt{d_2}} I_{2}\}$ of $\text{sym}(\pg)^K$ is given by
$$
\left[\Le_{\pg}\right]=(1-c)\left[\begin{matrix}
           \tfrac{d_2}{d_1}  & -\sqrt{\tfrac{d_2}{d_1}}  \\
           -\sqrt{\tfrac{d_2}{d_1}} & 1 \\
             \end{matrix}\right].
$$
Its eigenvalues are $0$ and $\lambda_\pg=\frac{(d_1+d_2)(1-c)}{d_1}$, and by the bounds given in \cite[Theorem 11]{DtrZll} we can conclude that 
$\lambda_\pg> \unm=2\rho$ and so $g_0$ is $(H\times K)$-stable.
\end{proof}


\begin{remark}\label{Jensen-rem}
Under the hypothesis of the above theorem, $\dim{\mca^G_1}=1$ and there is a second Einstein metric found by Jensen \cite{Jns}, which is a normal metric on the homogeneous space $M=H\times K/\Delta K$ (see \cite{DtrZll}), given by 
$$
g_J=\gk|_{\pg_1}+\frac{c+1}{c}t_J\gk|_{\pg_2} = \widetilde{\vp}(\overline{g}_J),
$$
where $t_J:=\frac{d_1c}{(d_1+2d_2)(1-c)}$ and $\overline{g}_J$ is the left-invariant metric on $H$ defined by $\overline{g}_J:=-\kil_\hg|_\ag+t_J(-\kil_\hg)|_\kg$ (see \cite[\S7]{stab-tres} for a detailed treatment).  As far as we know, $g_J$ is the only known non-standard normal Einstein metric.  While $g_0$ is a local maximum of $\scalar:\mca^G_1\rightarrow\RR$ by Theorem \ref{prod2}, the Einstein metric $g_J$ was proved to be a local minimum in \cite[Proposition 7.2]{stab-tres}.  
\end{remark}


\begin{thebibliography}{MM}

\bibitem[CNN]{ChnNknNkn} {\sc Z. Chen, Yu.G. Nikonorov, Y. V. Nikonorova}, Invariant Einstein metrics on Ledger-Obata spaces, {\it Diff. Geom. App.} {\bf 50} (2017), 7-87. 

\bibitem[DZ]{DtrZll} {\sc J. D'Atri, W. Ziller}, Naturally reductive metrics and Einstein metrics on compact Lie groups, {\it Mem. Amer. Math. Soc.} {\bf 215} (1979).

\bibitem[J]{Jns} {\sc G. Jensen}, Einstein metrics on principal fibre bundles, {\it J. Diff. Geom.} {\bf 8} (1973), 599-614.

\bibitem[LL]{stab} {\sc E.A. Lauret, J. Lauret}, The stability of standard homogeneous Einstein manifolds, {\it Math. Z.}, {\bf 303}, 16 (2023).

\bibitem[L]{stab-tres} {\sc J. Lauret}, On the stability of homogeneous Einstein manifolds, {\it Asian J. Math.} {\bf 26} (2022), 555-584.

\bibitem[LW]{stab-dos} {\sc J. Lauret, C.E. Will}, On the stability of homogeneous Einstein manifolds II, {\it J. London Math. Soc.} {\bf 106} (2022), 3638-3669.  

\bibitem[N]{Nkn}{\sc Yu. G. Nikonorov}, Algebraic structure of standard homogeneous Einstein manifolds, {\it Siberian Adv. Math.} {\bf 10} (2000), 59-82.

\bibitem[NR]{NknRdn} {\sc Yu.G. Nikonorov, E.D. Rodionov}, Standard homogeneous Einstein manifolds and Diophantine equations, {\it Archivum Math (Brno)} {\bf 32} (1996) 123--136.

\bibitem[NRS]{NknRdnSlv} {\sc Yu.G. Nikonorov, E.D. Rodionov, V.V. Slavskii}, Geometry of homogeneous Riemannian manifolds, {\it J. Math. Sci. (N.Y.)} {\bf 146} (2007) 6313--6390.

\bibitem[WZ]{WngZll2} {\sc M.Y. Wang, W. Ziller}, On normal homogeneous Einstein manifolds, {\it Ann. Sci. \'Ecole Norm. Sup.} {\bf 18} (1985), 563--633.
\end{thebibliography}
\end{document}